\newlist{mylist}{enumerate}{2}
\setlist[mylist,1]{label=(\alph*),font=\normalfont}
\setlist[mylist,2]{label=(\roman*),font=\normalfont}
\title{On ideals of product of commutative rings and their applications}
\author{A. R. Aliabad}
\address{Department of Mathematics, Shahid Chamran University, Ahvaz, Iran}
\email{aliabady\_r@scu.ac.ir}
\author{M. Badie}
\address{Department of Basic Sciences, Jundi-Shapur University of Technology, Dezful, Iran}
\email{badie@jsu.ac.ir}
\author{F. Obeidavi}
\address{Department of Mathematics, Shahid Chamran University, Ahvaz, Iran}
\email{foadobeidavinazem@gmail.com}
\keywords{Maximal ideals; Ideal of product of rings; Rings of functions; Zariski topology.}
\subjclass[2000]{13A15, 54C40}
\theoremstyle{plain}
\newtheorem{theorem}{Theorem}[section]
\newtheorem{lemma}[theorem]{Lemma}
\newtheorem{proposition}[theorem]{Proposition}
\newtheorem{corollary}[theorem]{Corollary}
\theoremstyle{definition}
\newtheorem{example}[theorem]{Example}
\newcommand{\M}{\mathcal{M}}
\newcommand{\N}{\mathbb{N}}
\newcommand{\R}{\mathbb{R}}
\newcommand{\Z}{\mathbb{Z}}
\newcommand{\I}{\mathfrak{i}}
\newcommand{\Ge}[1]{\big< #1 \big>}
\newcommand{\ff}{if and only if }
\newcommand{\RTA}{\Rightarrow}
\newcommand{\LTA}{\Leftarrow}
\newcommand{\LRTA}{\Leftrightarrow}
\newcommand{\Mi}{\mathrm{Min}}
\newcommand{\Ma}{\mathrm{Max}}
\newcommand{\An}{\mathrm{Ann}}
\newcommand{\Sp}{\mathrm{Spec}}
\newcommand{\Jac}{\mathrm{Jac}}
\newcommand{\In}{\mathrm{int}}
\newcommand{\Cl}{\mathrm{cl}}
\newcommand{\e}[1]{e_{_{#1}}}
\newcommand{\AI}{I}
\newcommand{\BI}{J}
\newcommand{\MI}{M}
\newcommand{\NI}{N}
\newcommand{\PI}{P}
\newcommand{\sub}{\subseteq}
\newcommand{\emp}{\emptyset}
\newcommand{\all}{\forall}
\newcommand{\ex}{\exists}
\newcommand{\la}{\lambda}
\newcommand{\LA}{\Lambda}
\newcommand{\we}{\wedge}
\newcommand{\bve}{\bigvee}
\newcommand{\bwe}{\bigwedge}
\newcommand{\bcap}{\bigcap}
\newcommand{\bcup}{\bigcup}
\newcommand{\CF}{\mathcal{F}}
\newcommand{\CG}{\mathcal{G}}
\newcommand{\CI}{\mathcal{I}}
\newcommand{\CU}{\mathcal{U}}
\newcommand{\FF}{\mathfrak{F}}
\newcommand{\BBN}{\mathbb{N}}
\newcommand{\BBR}{\mathbb{R}}
\begin{document}
	\begin{abstract}
			In this paper, leveraging the recent achievements of researchers, we have revisited the family of ideals of product of commutative rings. We demonstrate that if $ \{ R_\alpha \}_{\alpha \in A} $ is an infinite family of rings, then $ \left| \Ma \left( \prod_{\alpha \in A} R_\alpha \right) \right| \geqslant 2^{2^{|A|}} $. Notably, if these rings are local then the equality holds. We establish that $ \Ma(R_\alpha) $ is homeomorphic to a closed subset of $ \Ma \left( \prod_{\alpha \in A} R_\alpha \right) $, for each $ \alpha \in A $. Additionally, we show that $ \Ma(R) $ is disconnected \ff $ R $ is direct summand of its two proper ideals. We deduce that if the intersection of each infinite family of maximal ideals of a ring is zero, then the ring is not direct summand of its two proper ideals. Furthermore, we prove that for each ring $R$, $ C\left(\Ma(R)\right) $ is isomorphic to $ C\left(\Ma\left(C(Y)\right)\right) $, for some compact $T_4$ space $Y$. Finally, we explore that $h_M(x)$'s can define roles of zero-sets.
	\end{abstract}
	
	\maketitle

\section{Introduction}
	
The ideals, particularly prime, minimal prime and maximal ideals of rings, play crucial roles in classifying certain classes of rings. Therefore, studying them is a significant focus in algebra literature. One important class of rings is the product of rings. Characterizing the ideals, prime ideals, minimal prime ideals and maximal ideals of products of rings by the algebraic properties of component rings is a  goal for researchers. For some examples see \cite{Anderson2008Ideals,Finocchiaro2023Prime,Gilmer1992Product,Gilmer1989imbedding,Gilmer1995Infinite,Levy1991Prime,Olberding2005Prime,Tarizadeh2020Projectivity,Tarizadeh2023Tame}.

Using the concept of filters on different families and methods, the ideals of products of rings are characterized. For some examples see \cite{Aliabad2018Bourbaki,Finocchiaro2023Prime,Levy1991Prime,Tarizadeh2023Tame}. Additionally, by employing the noted filter method, the cardinality of the space of minimal prime ideals of some rings and infinite products of rings is characterized. For some examples see \cite{Aliabad2018Bourbaki,Levy1991Prime,Tarizadeh2023Tame}. 

From \cite[Proposition 1]{Levy1991Prime}, we can easily conclude that if $ A $ is infinite then $ \big| \Mi\big( \prod_{\alpha \in A} \Z \big) \big| = 2^{2^{|A|}} $.  This statement is generalized in \cite[Corollary 4.15]{Aliabad2018Bourbaki}. It is shown that if $ \{ D_\alpha \}_{\alpha \in A} $ is an infinite family of integral domains then the cardinal of $ \mathrm{Min} \left( \prod_{\alpha \in A} D_\alpha \right) $ is $ 2^{2^{|A|}} $. This statement is further explained and shown in \cite[Theorem 2.15]{Tarizadeh2023Tame}.

Moreover, the space of prime ideals of commutative rings and their subspaces, such as the space of minimal prime ideals and maximal ideals, are studied to characterize algebraic concepts using topological concepts. For some examples, see \cite{Aliabad2013Fixedplace,Aliabad2018Bourbaki,Aliabad2021Commutative,Gillman1957Rings,Henriksen1965Space,Kohls1958space}. 

$ \Sp(C(X)) $ equipped with the Zariski topology and its subspaces $ \Ma(C(X)) $ and $ \Mi(C(X)) $ are studied in several articles, for example see \cite{Aliabad2013Fixedplace,Aliabad2018Bourbaki,Dow1988Space,Henriksen1965Space,Walker1974Stone}. The most important conclusion in this literature is that $ \Ma(C(X)) $ is Stone-\v Cech compactification of $ X $.

The family of zero-sets of a space plays an important role in studying the relationship between the algebraic properties of the ring of real-valued continuous functions and the topological properties of the space. Consequently, it has become the foundation for introducing many concepts in the literature on rings of functions.

In continuation of researchers efforts to extend the concepts of rings of continuous functions to those of commutative rings, the authors in \cite{Aliabad2020Extension,Aliabad2021Commutative} have attempted to demonstrate that $ h_M(x) $'s can assume some roles of zero-sets. 

In this article, leveraging the recent achievements of researchers, we have revisited the concepts of:
\begin{mylist}
	\item The family of ideals of product of commutative rings.
	\item Particularly, the family of maximal ideals of product of commutative rings and its cardinality,
	\item The space of maximal ideals equipped by Zariski topology, 
	\item The ring of functions of the space of maximal ideals and 
	\item The role of $ h_M(x) $'s in the studying of algebraic concepts.
\end{mylist}

In Section 2, we recall some pertinent definitions and statements. In Section 3, we introduce a method to define an ideal using an ideal and a filter, where the ideal is an ideal of the product of a family of commutative rings and the filter is a filter on the index set of this family. Additionally, we present another method to define a filter using an ideal of the product of a family of commutative rings. In the remainder of this section, we explore the relationship between the aforementioned filters and ideals. We demonstrate that if $\{ R_\alpha \}_{\alpha \in A} $ is an infinite family of commutative rings, then $ \left| \Ma\left( \prod_{\alpha \in A} R_\alpha \right) \right| \geqslant 2^{2^{|A|}} $. Particularly, if these commutative rings are local rings, then the equality holds. Section 4 is devoted to study the space of $\Ma(R)$ equipped with Zariski topology in three parts. 

\subsection{The topological properties of $\Ma(R)$} 
Closure, interior, limit and isolated points of the space is characterized. It is proved that $ \Ma(R_\alpha) $ is homeomorphism to a closed subset of $ \Ma \left( \prod_{\alpha \in A} R_\alpha \right) $. Furthermore, it is deduced that if this family is finite, then $ \Ma \left( \prod_{i = 1}^n R_i \right)  $ is homeomorphism by the disjoint union of $ \Ma( R_i ) $'s. It is characterized that under certain conditions, $ \Ma(R) $ is discrete, cofinite or disconnected. In the final of this part as an application is deduced that if the intersection of each family of maximal ideals of a ring is zero, then this ring is not direct summand of two of its proper ideals. 

\subsection{The rings of real-valued continuous functions on $\Ma(R)$}
In this part, we prove that $ C\left(\Ma\big( \prod_{i=1}^n R_i \big)\right) $ is isomorphic to product of $ C\left(\Ma(R_i)\right) $'s; if $D$ is an integral domain, then $C\left(\Ma(D)\right)$ is isomorphic to the filed $\R$; and the ring of functions on the family of maximal ideals of any ring is isomorphic to the ring of functions on the family of maximal ideals of a ring of functions. 

\subsection{The role of $h_M(x)$'s}
In the final part, we recall and present statements demonstrating that $h_M(x)$'s can serve as zero-sets. Additionally, we characterize regular rings and rings without regular proper ideals using $h_M(x)$'s.

\section{Preliminary}

Let $X$ be a set and $ \leqslant $ a relation on $X$. We say $(X,\leqslant)$ is a directed set if $\leqslant$ is transitive and reflexive on $X$ and each finite subset of $X$ has an upper bound in $X$. A lattice $L$ in which every subset has a supremum is said to be a complete lattice. A complete lattice $L$ is said to be a frame if for every $a\in L$ and every $B\sub L$, we have $a\we{\bve}B=\bve_{b\in B}a\we b$. A complete lattice is called a symmetric frame if for every $a\in L$ and every $B\sub L$, we have:
\[ a\we{\bve}B=\bve_{b\in B}a\we b\quad,\quad a\vee{\bwe}B=\bwe_{b\in B}a\vee b.\]

Recall that, supposing $X$ is a partially ordered set, a function $f:X\to X$ is called a closure mapping if $f$ is increasing, idempotent, and extensive. A closure mapping $f$ on a lattice ordered is said to be nucleus if $f(a\we b)=f(a)\we f(b)$ for every $a,b\in X$. A frame homomorphism is a homomorphism of partially ordered set that preserves finite meets and arbitrary joins.

Let $X$ be a nonempty set. An intersection structure (briefly, $\cap$-structure or cap-structure) on $X$ is a nonempty family  $\mathcal{M}_X$  of subsets of $X$ which is  closed under arbitrary intersection.  In this case we say $(X,\mathcal{M}_X)$  is a cap-structure space. Clearly, if $(X,\mathcal{M}_X)$ is a cap-structure space, then $\mathcal{M}_X$ is a complete lattice in which  for each subfamily $\{A_i\}_{i\in I}$ of $\mathcal{M}_X$:
\[ \bigwedge _{i\in I}A_i=\bigcap_{i\in I}A_i \quad, \quad \bigvee _{i\in I}A_i=\bigcap\{B\in \mathcal{M}_X: \bigcup_{i\in I}A_i\subseteq B\}.\]

\begin{lemma}\label{distributive is frame}
	Let $L$ be a cap-structure in which for every upper directed subset $A$ of $L$, we have ${\bcup}A={\bve}A$. Then the following statements are equivalent.
	\begin{mylist}
		\item $L$ is a distributive lattice. \label{distributive}
		\item $L$ is a frame. \label{frame}
	\end{mylist}	
\end{lemma}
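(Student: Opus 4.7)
\medskip

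\noindent\textbf{Proof plan.} The implication \ref{frame}$\Rightarrow$\ref{distributive} is routine: specialising the frame law to a two-element set $B=\{b,c\}$ yields $a\wedge(b\vee c)=(a\wedge b)\vee(a\wedge c)$, which together with the dual identity (automatic in any lattice once one direction of distributivity holds) makes $L$ distributive. So the substance of the lemma is the converse.

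For \ref{distributive}$\Rightarrow$\ref{frame}, fix $a\in L$ and $B\sub L$; I must verify $a\we\bve B=\bve_{b\in B}(a\we b)$. The plan is to use the hypothesis $\bcup A=\bve A$ for upper directed $A$ as a bridge between set-theoretic unions and lattice joins, where finite distributivity does the local work. Introduce
\[
D=\Bigl\{b_{1}\vee\cdots\vee b_{n}:n\in\N,\ b_{1},\dots,b_{n}\in B\Bigr\},
\]
the family of finite joins of elements of $B$. First I would check that $D$ is upper directed (the join of two finite joins is again a finite join) and that $\bve D=\bve B$ (since $B\sub D$ and each element of $D$ is $\leqslant\bve B$). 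Hence by hypothesis $\bve B=\bve D=\bcup D$.

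Next I would compute, using that the meet of two elements of $L$ is their set-theoretic intersection,
\[
a\we\bve B=a\cap\bcup D=\bcup_{d\in D}(a\cap d)=\bcup_{d\in D}(a\we d).
\]
The family $\{a\we d:d\in D\}$ is again upper directed (if $d\leqslant d'$ then $a\we d\leqslant a\we d'$), so the hypothesis gives $\bcup_{d\in D}(a\we d)=\bve_{d\in D}(a\we d)$. Finally, finite distributivity, which is available because $L$ is distributive (assumption \ref{distributive}) and meets/joins in $L$ agree with those of $L$ as a complete lattice, yields
\[
a\we(b_{1}\vee\cdots\vee b_{n})=(a\we b_{1})\vee\cdots\vee(a\we b_{n})\leqslant\bve_{b\in B}(a\we b)
\]
for each $d=b_{1}\vee\cdots\vee b_{n}\in D$, and conversely $B\sub D$ gives $\bve_{b\in B}(a\we b)\leqslant\bve_{d\in D}(a\we d)$. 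Chaining the equalities delivers the frame law.

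The only delicate point, and what I expect to be the main obstacle in writing the proof cleanly, is not in the lattice manipulations but in scrupulously distinguishing $\bcup$ (set-theoretic) from $\bve$ (lattice) and invoking the directedness hypothesis exactly where it is needed; once the directed families $D$ and $\{a\we d:d\in D\}$ are in place, the argument reduces finite distributivity to full frame distributivity in two applications of the hypothesis.
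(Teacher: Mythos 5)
Your proof is correct and follows essentially the same route as the paper: both pass to the upper directed family of finite joins of $B$, use the hypothesis $\bcup=\bve$ together with the fact that meets in a cap-structure are intersections, and finish with finite distributivity. Your version is in fact slightly more explicit than the paper's at the second application of the hypothesis (to the directed family $\{a\we d: d\in D\}$), which the paper compresses into one displayed chain.
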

\begin{proof}
	\ref{distributive} $\RTA$ \ref{frame}. Suppose that $x\in L$ and $B\sub L$. Let $A$ be set of finite joins of elements of $B$. Clearly, $A$ is a upper directed subset of $L$ and so ${\bcup}A={\bve}A={\bve}B$. Thus, by hypothesis, we can write:
	$$x\we{\bve}B=x\cap{\bcup}A=\bcup_{a\in A}x\cap a=\bcup_{a\in A}x\cap\bve_{b\in F_a}b=\bcup_{a\in A}\bve_{b\in F_a}x\cap b=\bve_{b\in B}x\cap b.$$
	
	\ref{frame} $\RTA$ \ref{distributive}. It is clear.
\end{proof}

Everywhere in the article, it is assumed that all rings are commutative with identity. An $f$-ring is a subring of a product of totally-ordered rings that is also closed under the natural lattice operations. For each ring $R$, $\Sp(R)$, $\Mi(R)$, $\Ma(R)$ and $\Jac(R)$ represent the family of all prime ideals of $R$, the family of all minimal prime ideals of $R$, the family of all maximal ideals of $R$, and the intersection of all maximal ideals of $R$, respectively. We say an ideal $P$ is pseudoprime if $ xy=0 $ implies that either $x \in P$ or $y \in P$. We describe a ring as local if it has a unique maximal ideal. A ring is termed Gelfand if every prime ideal is contained in a unique maximal ideal. An ideal is called Hilbert if it is an intersection of maximal ideals. An element of a ring labeled as regular if it is not a zero-divisor. An ideal containing a regular element is referred to as a regular ideal. A ring $R$ is called von Neumann regular ring, if for every $x$ in $R$, there exits $y$ in $R$ such that $x=x^2y$. We say a ring is an integral domain if it has no zero-divisor element. For any subsets $S$ and $T$ of a ring $R$, we denote $\{x\in R:~x S\sub T\}$ by $(T:S)$. Specially, $(\{0\}:S)$ and $(\{0\}: x)$ are denoted by $\An(S)$ and $\An(x)$, respectively. Two ideals $I$ and $J$ of a ring $R$ are termed comaximal whenever $I+J=R$. An ideal is defined as semiprime if it is an intersection of prime ideals. If the zero ideal of a ring is semiprime, we say the ring is reduced. We say $R$ is semiprimitive, if $\Jac(R)$ is zero. A prime ideal $\PI_0$ in $\Mi(R)$ is called a Bourbaki associated prime divisor of $R$, if $\sqrt{\{0\}}\neq\bigcap_{\PI_0\neq\PI\in\Mi(R)}\PI$. The family of all Bourbaki associated prime divisor of $R$ is indicated by $\mathcal{B}(R)$. A family $\mathcal{A}$ of prime ideals is described as a fixed-place family if no element of this family can be omitted from $\bigcap\mathcal{A}$. For more information see \cite{Aliabad2013Fixedplace,Aliabad2018Bourbaki}. For each subset $S$ of $R$, the family $ \{\MI \in \Ma(R):~S \sub \MI \} $ is denoted by $h_M(S)$. Particularly, $h_M(\{x\})$ is denoted by $h_M(x)$. The collection $\{h_M(S):~S\sub M\}$ forms the family of closed subsets of a topology on $\Ma(R)$ called Zariski topology or hall-kernel topology. The intersection of any family $ \mathcal{A}$ of sets symbolized by $k(\mathcal{A})$. An ideal $I$ is termed (strong) $z$-ideal, if for each (finite subset $F$) element $x$ in $I$, ($kh_M(F)$) $kh_M(x)$ is contained in $I$. A ring $R$ is called arithmetical, if for each ideals $I$, $J$ and $K$ of $R$ we have $ I \cap (J+K) = (I \cap J) + (I \cap K) $. This is equivalent to say that for every ideals $I$, $J$ and $K$ of $R$ we have $ I + (J \cap K) = (I + J) \cap (I + K) $.

For each set $X$, $\tau=\{G\sub X:~G^c ~\text{is finite}\}\cup\{\emp\}$ is called cofinite topology. $\bigoplus_{\la \in\LA} X_\la $ is symbolized for the disjoint union of a family $ \{ X_\la \}_{\la \in\LA} $ of topological spaces. For each topological space $X$, $C(X)$ denotes the family of all real-valued continuous functions on $X$. A space is termed $P$-space if each zero-set is open and is called almost $P$-space if each non-empty zero-set has a non-empty interior. In this article, a filter may be not proper. A maximal element of the family of all proper filters is called ultrafilter. We say a filter $\CF$ is fixed whenever $ \bigcap \CF\neq\emp$.

The reader is referred to \cite{Atiyah1969Introduction,Gillman1960Rings,Sharp2000Steps,Willard1970General,Blyth2005Lattices,Davey2002Introduction,Gratzer2011Lattice,Dikranjan2013Categorical,Abdollahpour2024Homorphisms,Hashemi2020Cap} for undefined terms and notations. 

\section{New ideals in $R=\prod_{\la\in\LA}R_\la$}

Henceforth, in this section $ R $ denotes $ \prod_{\la \in\LA} R_\la $, in which $ \{ R_\la \}_{\la \in\LA} $ is a family of rings. For every $ x \in R $, the set $ \{ \la \in\LA : x_\la=0 \} $ is denoted by $Z(x)$ and $ A \setminus Z(x) $ is denoted by $Coz(x)$. The set $\{e \in  I : ~\all\la\in\LA,~e_\la=0 \quad\text{ or } \quad e_\la=1\}$ is denoted by $E(R)$. Also, for every ideal $I$ of $R$, we denote by $E(I)$ the set $I\cap E(R)$. It is clear, for every $Z\sub  A $, there is a unique element $e_{_Z} $ in $ E(R)$ such that $ Z(e_{_Z})=Z$. Clearly, for each pair $ e , e' \in E(R) $
\begin{equation}
	e=e' \quad \text{ \ff } \quad Z(e)=Z(e') \label{equality in E(R)}
\end{equation}
and 
\begin{equation}
	e+e' \in E(R) \quad \text{ \ff } \quad Z(e) \cup Z(e')=X \label{sum of E(R)'s in E(R)} 
\end{equation}

\begin{lemma}\label{Z and e elementary propoeties}
	Suppose that $x , y \in R$, $\AI$ is an ideal of $R$ and $Y , Z \sub\LA$. Then
	
	\begin{mylist}
		\item $Z(x) \cap Z(y) \sub Z(x+y)$. If $x , y \in E(R)$, then the equality holds. \label{Z cap sub}
		\item $Z(x) \cap Z(y)=Z(x+e_{_{Coz(x)}} y )=Z(x e_{_{Coz(y)}}+y )$. \label{Z cap =}
		\item If $x , y \in E(\AI)$, then $x+e_{Coz(x)} y , x e_{Coz(y)}+y \in E(\AI)$. \label{x + e_Coz in E(I)}
		\item $ Z(x) \cup Z(y) \sub Z(xy) $. If either $ x , y \in E(R) $ or $ R_\la $ is an integral domain, for every $ \la \in\LA $, then the equality holds. \label{Z cup}
		\item $Z(\e{Y}+\e{Z})=Z(e_{_Y})\cap Z(e_{_Z})=Y\cap Z$. \label{Z(sum)}
		\item $Z(e_{_Y}e_{_Z})=Z(e_{_Y})\cup Z(e_{_Z})=Y\cup Z$. \label{Z(multiple)}
		\item $e_{_{Y\cup Z}}=e_{_Y}e_{_Z}$. \label{e cup}
		\item $e_{_{Y \cap Z}} = e_{_{Y \cup Z^c}} + e_{_Z} = e_{_{Z^c}}e_{_Y}+e_{_Z}$, so $e_{_Z} + e_{_{Z^c}} = 1$. \label{e cap}
	\end{mylist}
\end{lemma}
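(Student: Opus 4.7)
The plan is to verify each of (a)--(h) by pointwise computation: since $Z(x) = \{\lambda : x_\lambda = 0\}$ is defined coordinate by coordinate, each equation or inclusion reduces to an elementary case analysis in the coordinate rings $R_\lambda$, taking full advantage of the fact that elements of $E(R)$ have coordinates only $0$ or $1$. Throughout, I would repeatedly invoke the uniqueness principle of equation (\ref{equality in E(R)}) (paper's numbered identity), namely that an element of $E(R)$ is determined by its zero set.

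For (a), the inclusion $Z(x)\cap Z(y)\sub Z(x+y)$ is immediate since $x_\lambda = y_\lambda = 0$ forces $(x+y)_\lambda = 0$; when $x,y\in E(R)$, the reverse uses that $x_\lambda,y_\lambda\in\{0,1\}$ so that $(x+y)_\lambda = 0$ forces both components to vanish. Part (b) is a direct coordinate check: on $Coz(x)$ the summand $e_{_{Coz(x)}}y$ vanishes, while on $Z(x)$ we have $x+e_{_{Coz(x)}}y = y$, which together yield $Z(x+e_{_{Coz(x)}}y) = Z(x)\cap Z(y)$; the other equality is symmetric. Part (c) follows immediately from (b): the zero set of $x + e_{_{Coz(x)}}y$ is $Z(x)\cap Z(y)$, and coordinatewise, outside this set the value is either $x_\la$ or $y_\la$, hence $1$, proving the element lies in $E(R)$; membership in $I$ is automatic because $x,y\in I$ and $I$ is an ideal. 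Part (d) is again coordinatewise: $x_\la y_\la=0$ whenever either factor vanishes, with equality in the $E(R)$ or integral-domain case since in either case $R_\lambda$ has no zero-divisors among the components that arise.

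For (e)--(h), which concern the characteristic idempotents, I would proceed as follows. Part (f) is a special case of (d) applied to $e_{_Y},e_{_Z}\in E(R)$, combined with $Z(e_{_Y}) = Y$. For (g), the element $e_{_Y}e_{_Z}$ lies in $E(R)$ (coordinates are products of $0/1$ entries), and by (f) its zero set is $Y\cup Z$, so uniqueness (\ref{equality in E(R)}) gives $e_{_Y}e_{_Z}=e_{_{Y\cup Z}}$. For (e), I would apply (a) to $e_{_Y}$ and $e_{_Z}$ to get the inclusion $Y \cap Z \sub Z(e_{_Y}+e_{_Z})$, and for the reverse inclusion analyze the four cases for the pair $(\la \in Y\text{ or not}, \la \in Z\text{ or not})$ to read off when the sum of coordinates vanishes. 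For (h), I would compute the coordinates of $e_{_{Z^c}}e_{_Y}+e_{_Z}$ in the same four-case way, observe that the result is $0$ exactly on $Y\cap Z$ and $1$ elsewhere, and conclude by uniqueness that it equals $e_{_{Y\cap Z}}$; then the equality $e_{_{Z^c}}e_{_Y} = e_{_{Y\cup Z^c}}$ comes from (g). Specializing to $Y=\LA$ (or equivalently $Y=Z^c$) yields $e_{_\emp} = 1 = e_{_Z}+e_{_{Z^c}}$.

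The main potential obstacle is the delicate point in (a) and (e) where the sum of two $1$'s in a coordinate $R_\la$ need not be nonzero (for instance in characteristic $2$). I would handle (a) by noting that when $x,y\in E(R)$ the hypothesis $x,y\in E(R)$ is being used in tandem with the surrounding results of the paper (in particular (\ref{sum of E(R)'s in E(R)})), and in (e) I would verify the four-case analysis by appealing to the structure of the argument rather than to cancellation; the cleanest route is actually to rewrite $Z(e_{_Y}+e_{_Z})$ using part (b) via $Z(e_{_Y}+e_{_{Coz(e_{_Y})}}e_{_Z}) = Z(e_{_Y})\cap Z(e_{_Z})$ and note that the two expressions agree, or simply to do the case analysis under the standing conventions of the paper. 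Everything else is routine bookkeeping with indicator-style idempotents.
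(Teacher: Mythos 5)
Your overall strategy --- coordinatewise case analysis combined with the uniqueness of the idempotent $e_{_Z}$ determined by its zero set --- is exactly the approach the paper takes (the paper declares (a), (c), (d) straightforward, proves (b) by the same disjoint-support computation, and obtains (g), (h) from (e)/(f) together with the uniqueness fact), and your treatments of (b), (c), (d), (f), (g) and of the main identity in (h) are correct; in (h) the four-case computation never produces a coordinate $1+1$, so no cancellation issue arises. One small slip: specializing (h) to $Y=\Lambda$ gives only the trivial identity $e_{_Z}=e_{_Z}$; it is the choice $Y=Z^c$ (or $Y=\emptyset$, as in the paper) that yields $e_{_Z}+e_{_{Z^c}}=1$.

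The genuine gap is in the equality claims of (a) and (e). Your initial argument (``$(x+y)_\lambda=0$ forces both coordinates to vanish'') fails whenever $1+1=0$ in some $R_\lambda$, as you yourself note; but neither of your proposed repairs works. Appealing to ``the surrounding results'' or ``standing conventions'' is not an argument, and the concrete patch is false: $e_{_Y}+e_{_Z}$ and $e_{_Y}+e_{_{Coz(e_{_Y})}}e_{_Z}$ differ exactly at coordinates $\lambda\notin Y\cup Z$, where the first is $1+1$ and the second is $1$, so the two expressions fail to agree precisely in the problematic case. Indeed $Z(e_{_Y}+e_{_Z})=(Y\cap Z)\cup\{\lambda\notin Y\cup Z:\ 1+1=0\ \text{in } R_\lambda\}$, so the equalities in (a) and (e) are simply not provable without an implicit hypothesis such as $1+1\neq 0$ in every $R_\lambda$ (the same defect already sits in the paper's displayed fact that $e+e'\in E(R)$ iff $Z(e)\cup Z(e')$ is everything). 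To be fair, the paper's own proof silently glosses over exactly this point, and every later use of (a)/(e) --- in (b), (h) and beyond --- involves summands with disjoint cozero sets, where your computation is sound; but as written, the equality cases of (a) and (e) remain unestablished in your proposal, and they cannot be closed by a cleverer calculation, only by adding the missing hypothesis or restricting to the disjoint-support situation.
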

\begin{proof}
	\ref{Z cap sub}, \ref{x + e_Coz in E(I)} and \ref{Z cup}. They are straightforward.

	\ref{Z cap =}. Clearly, we have:
	\begin{align*}
		Z(x)\cap Z(y) & = Z(x)\cap(Coz(x)\cup Z(y))=Z(x)\cap(Z(e_{Coz(x)})\cup Z(y)) \\
					  & =Z(x)\cap Z(e_{Coz(x)}y)=Z(x+e_{Coz(x)}y).
	\end{align*}
	Similarly, we can show that $ Z(x) \cap Z(y)=Z(e_{_{Coz(y)}} x+y) $.
	
	\ref{Z(sum)} and \ref{Z(multiple)}. They follow from parts \ref{Z cap sub} and \ref{Z cup}.
	
	\ref{e cup}. By the fact \eqref{equality in E(R)}, it follows from part \ref{Z(multiple)}.
	
	\ref{e cap}. By part \ref{Z(multiple)}, $ Z(e_{Y \cup Z^c}+e_{_Z})=Z(e_{Y \cup Z^c}) \cap Z(e_{_Z})=\big( Y \cup Z^c\big)\cap Z=Y \cap Z $. By the fact \eqref{sum of E(R)'s in E(R)}, $ e_{_{Y \cup Z^c}}+e_{_Z} \in E(R) $ and therefore $ e_{_{Y \cap Z}}=e_{_{Y \cup Z^c}}+e_{_Z} $, by the fact \eqref{equality in E(R)}. Hence $ e_{_Z}+e_{_{Z^c}}=e_{_Z}+e_{_{\emp \cup Z^c}}=e_{_{\emp \cap Z}}=e_{_\emp}=1  $.
\end{proof}

By $\I(\CF,I)$ we mean the set $\{x\in R:~\ex F\in\CF\; x\e{F^c}\in\AI\}$, for each filter $\CF$ on $\LA$ and ideal $I$ of $R$.

\begin{lemma}\label{I(F,I) properties}
	Suppose that $\CF$ is a filter on $\LA$, $I$ is an ideal of $R$ and $x \in R$. Then the following statements hold:
	\begin{mylist}
		\item $e_{_F}\in\I(\CF,I)$ for every $F\in\CF$. \label{e_F in I}
		\item $x \in \I(\CF,I)$ \ff there exists $y$ in $I$ such that $Z(x-y) \in \CF$. \label{x in I}
		\item $\I(\CF,I)$ is an ideal containing $I$. \label{I is an ideal}
		\item $\I(\CF,\{0\}) = \Ge{\{ e_{_Z} : Z \in \CF \}} = \{ x \in R : Z(x) \in \CF\} $. \label{I(F,0)}
		\item $\I(\CF,I)$ is a proper ideal \ff $\e{F^c}\notin I$ for every $F\in\CF$. \label{I is proper}
		\item Supposing $I_\la=\pi_\la(I)$ for every $\la\in\LA$, we have $\I(\CF,I)\sub\I(\CF,\prod_{\la\in\LA}I_\la)$ and the converse is true \ff the following implication holds: \label{I is subset of product}
		\[ \ex F\in\CF \quad \all \la\in F,\quad x_\la\in I_\la \quad \RTA \quad \ex F\in\CF \quad xe_{_F^c}\in I \]
		\item Suppose that for each $\la \in \Lambda$, $\AI_\la$ is an ideal of $R_\la$. Then $x \in \I(\CF,\prod_{\la \in\LA} \AI_\la)$ if and only if $F=\{\la\in\LA:~x_\la \in \AI_\la\}\in\CF$.\label{x in I(product)}
		\item If $I$ is a prime ideal, $\I(\CF,\AI)\sub\I(\CG,\AI)\neq R$ and $\CG$ is an ultrafilter, then $\CF\sub\CG$. \label{F sub G}
		\item For every ideal $I$ of $R$, $\I(\CF,I)=I$ \ff $\CF=\{X\}$. \label{I(I) = I}		
	\end{mylist}
\end{lemma}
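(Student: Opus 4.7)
The plan is to leverage two fundamental idempotent identities from Lemma~\ref{Z and e elementary propoeties}: $e_F+e_{F^c}=1$ and $e_F e_{F^c}=e_\LA=0$, together with the componentwise rule $(e_Z)_\la=0\Leftrightarrow\la\in Z$. Upward-closure and finite-intersection stability of $\CF$ handle all the combinatorics. I would take the parts in the listed order, so that later items can invoke earlier ones.

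Items (a)--(c) are pure idempotent manipulation. For (a), $e_F e_{F^c}=e_\LA=0\in I$. For (b), if $xe_{F^c}\in I$ set $y:=xe_{F^c}\in I$; then $x-y=x(1-e_{F^c})=xe_F$, so $Z(x-y)\supseteq F\in\CF$. Conversely, given $y\in I$ with $F:=Z(x-y)\in\CF$, a componentwise check gives $(x-y)e_{F^c}=0$, hence $xe_{F^c}=ye_{F^c}\in I$. Then (c) is immediate: $I\sub\I(\CF,I)$ is (b) with $y=x$; additive closure uses the common refinement $F_1\cap F_2\in\CF$ together with $e_{F_1^c\cup F_2^c}=e_{F_1^c}e_{F_2^c}$, and $R$-multiplicativity is trivial. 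Items (d)--(g) are direct unpacking. Unpacking $xe_{F^c}=0$ gives $F\sub Z(x)$, whence $\I(\CF,\{0\})=\{x:Z(x)\in\CF\}$; the identity $x=xe_{Z(x)}$ places such $x$ inside $\Ge{\{e_Z:Z\in\CF\}}$, while the reverse containment follows from (a) and (c); and (e) is just the specialization $x=1$. For (f) and (g), note $(xe_{F^c})_\la$ equals $x_\la$ on $F$ and $0$ off $F$, so $xe_{F^c}\in\prod_\la I_\la$ is equivalent to $x_\la\in I_\la$ for all $\la\in F$; then (g) is filter upward-closure applied to $F_0:=\{\la:x_\la\in I_\la\}$, and (f) reads off by comparing $xe_{F^c}\in I$ and $xe_{F^c}\in\prod I_\la$ under $I_\la=\pi_\la(I)$.

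The main step is (h). Fix $F\in\CF$; by (a) and the hypothesis $\I(\CF,I)\sub\I(\CG,I)$, one has $e_F\in\I(\CG,I)$, so some $G\in\CG$ satisfies $e_F e_{G^c}=e_{F\cup G^c}\in I$. Suppose for contradiction $F\notin\CG$; ultrafilter maximality of $\CG$ gives $F^c\in\CG$, so $G':=G\cap F^c\in\CG$. But $(G')^c=G^c\cup F$, and hence $e_{(G')^c}=e_{F\cup G^c}\in I$, which reads $1\cdot e_{(G')^c}\in I$ with $G'\in\CG$, i.e., $1\in\I(\CG,I)$, contradicting $\I(\CG,I)\neq R$. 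Hence $F\in\CG$, and since $F\in\CF$ was arbitrary, $\CF\sub\CG$. For (i), if $\CF=\{\LA\}$ the only admissible $F$ is $\LA$ and $xe_{\LA^c}=x\cdot e_\emp=x\cdot 1=x$, so $\I(\CF,I)=I$ for every $I$; conversely, if $\CF\neq\{\LA\}$ then some $F\in\CF$ has $F\neq\LA$, and (a) produces $e_F\in\I(\CF,\{0\})\setminus\{0\}$, so the identity fails already at $I=\{0\}$ (reading the quantifier ``for every ideal $I$'' as belonging to the left-hand side of the biconditional).

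The real obstacle is (h): the key conceptual move is to recognize that the membership of a single idempotent $e_F\in\I(\CG,I)$, combined with $F^c\in\CG$, already forces $1\in\I(\CG,I)$ through the collapse $(G\cap F^c)^c=G^c\cup F$. Incidentally, the primality hypothesis on $I$ does not actually enter the argument; only the ultrafilter maximality of $\CG$ and the properness of $\I(\CG,I)$ are used.
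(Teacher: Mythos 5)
Your proof is correct, and for most parts --- (a), (b), (c), (e), (f), (g), (i) --- it follows the same routine idempotent manipulations as the paper (you even repair a small slip: in the converse of (b) the paper writes $x\e{Z(x-y)^c}=y$, whereas the correct identity is $x\e{Z(x-y)^c}=y\e{Z(x-y)^c}\in I$, which is what you use). The genuine divergence is in part (h). The paper argues: from $F\in\CF\setminus\CG$ one gets $F^c\in\CG$, then properness of $\I(\CG,I)$ forces $\e{F}\notin I$, and \emph{primality of $I$} applied to $\e{F}\e{F^c}=0$ yields $\e{F^c}\in I$, whence $1\in\I(\CF,I)\sub\I(\CG,I)$, a contradiction. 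You instead use the membership witness directly: $\e{F}\in\I(\CF,I)\sub\I(\CG,I)$ gives $G\in\CG$ with $\e{F}\e{G^c}=\e{F\cup G^c}\in I$, and if $F\notin\CG$ then $G'=G\cap F^c\in\CG$ satisfies $\e{(G')^c}=\e{F\cup G^c}\in I$, so $1\in\I(\CG,I)$ --- contradiction. This is a genuinely different and in fact stronger argument: it never uses that $I$ is prime, only that $\CG$ is an ultrafilter and $\I(\CG,I)$ is proper, so it proves (h) for arbitrary ideals. The paper is aware that primeness is not essential, but only records this afterwards in a separate proposition restricted to $I=\{0\}$ (with arbitrary filters); your remark subsumes part (h) in full generality. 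Two further minor differences: you supply a proof of part (d), which the paper's proof silently omits, and in (i) you refute the identity at $I=\{0\}$ via $0\neq\e{F}\in\I(\CF,\{0\})$, while the paper uses a proper ideal containing $\e{F^c}$ (e.g.\ $\Ge{\e{F^c}}$); both choices are valid.
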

\begin{proof}
	\ref{e_F in I}. Obviously, we can write:
	\[\all F\in\CF, \quad e_{_F}e_{_{F^c}}=0\in I \quad \Rightarrow \quad \all F\in\CF, \quad e_{_F}\in\I(\CF,I).\]
	
	\ref{x in I}$\RTA$. By the assumption there exists $F \in \CF$ such that $ y = x e_{_{F^c}} \in I $. Then for each $ \lambda \in F $ we have 
	\[ y_\lambda = x_\lambda (e_{_{F^c}}) \quad \Rightarrow \quad y_\lambda - x_\lambda = 0 \quad \Rightarrow \quad \lambda \in Z(x-y)\] 
	thus $ F \subseteq Z(x-y) $ and therefore $Z(x-y) \in \CF $.
	
	 \ref{x in I}$\LTA$. It is easy to see that $ x e_{_{Z(x-y)^c}} = y \in I $ and thus, by the assumption, $ x \in \I(\CF,I)$.
	 
	 \ref{I is an ideal}. Since $ 0\e{X^c} = 0 \in \AI $, it follows that $ 0 \in \I(\mathcal{F},\AI) \neq \emptyset $. Suppose that $ x , y \in \I(\mathcal{F},\AI) $ and $ r \in R $, then $ E , F \in \mathcal{F} $ exist such that $ x\e{E^c} , y\e{F^c} \in \AI $. So, by Lemma \ref{Z and e elementary propoeties}\ref{e cup}, 
	 \[ ( x + r y ) \e{( E \cap F )^c} = x \e{E^c} \e{F^c} + r y \e{F^c} \e{E^c} \in \AI \]
	 Since $ E \cap F \in \mathcal{F} $, it follows that $ x  + r y \in \I(\mathcal{F},\AI) $. Consequently, $ \I(\mathcal{F},\AI) $ is an ideal.
	 
	 \ref{I is proper}. It is clear that $1\notin\I(\CF, I)$ \ff $e_{_{F^c}}=1e_{_{F^c}}\notin I$ for every $F\in\CF$.
	 
	 \ref{I is subset of product} Since $I \subseteq \prod_{\alpha \in \LA} I_\la$, it follows that $\I(\CF,I)\sub\I(\CF,\prod_{\la\in\LA}I_\la)$. 
	 
	 We can write:
	 \begin{align*}
	 	\I(\CF,I) \subseteq \I(\CF,\prod_{\la \in \LA} I_\lambda) \quad
	 			& \equiv \quad x \in \I(\CF,I) \quad \RTA \quad x \in \I(\CF,\prod_{\la \in \LA} I_\lambda) \\
	 			& \equiv \quad \ex F\in\CF \quad xe_{_{F^c}}\in I \quad \RTA \quad \ex F\in\CF \quad \all \la\in F,\quad x_\la\in I_\la.	 	
	 \end{align*}
	 
	 \ref{x in I(product)} $\RTA$. Since $x\in \I(\CF,\prod_{\la\in\LA} \AI_\la)$, so $E \in \CF$ exists such that $x \e{E^c} \in\prod_{\la\in\LA}\AI$ and thus $x_\la\in \AI_\la$, for each $\la \in E$. This shows that $F \supseteq E \in \CF$ and therefore $F \in \CF$. 
	 
	 \ref{x in I(product)} $\LTA$. Clearly, $x \e{F^c} \in \prod_{\la\in\LA} \AI_\la$ and therefore $x \in \I( \CF, \prod_{\la \in\LA}\AI_\la)$.

	 \ref{F sub G}. On contrary, suppose that $ F \in \CF \setminus \CG $ exists, so $F^c \in \CG$. Since $ 1 \notin \I(\CG,I) \neq R $, it follows that $e_{_F} = 1 e_{_F} \notin I$ and, by the facts $ e_{_F} e_{_{F^c}} = 0 \in I $ and $I$ is prime, we have $ e_{_{F^c}} \in I$. Since $F \in \CF$ and $ 1 e_{_{F^c}} = e_{_{F^c}} $, it follows that $ 1 \in \I(\CF,I) \neq R $, which is a contradiction.   
	 
	 \ref{I(I) = I} $\RTA$. Suppose that $\CF=\{X\}$ and $x\in\I(\CF,I)$. By hypothesis, $x=x\e{\emptyset}=x\e{X^c}\in I$.
	 
	 \ref{I(I) = I} $\LTA$. Assume that $\CF\neq\{X\}$ and $I$ is a proper ideal of $R$ such that $\e{F^c}\in I$ for some $X\neq F\in\CF$. Thus, $1e_{_{F^c}}=e_{_{F^c}}\in I$ and so $1\in\I(\CF, I)$. Therefore, $I\neq R=\I(\CF, I)$.
	 
\end{proof}

Assume that $\FF$ is the family of all filters on $\LA$ (not necessarily proper), and $\CI(R)$ the family of all ideals of the ring $R$. Clearly, $\FF$ and $\CI(R)$ are complete lattices in which
\[ \bwe_{t\in T}I_t=\bcap_{t\in T}I_t \quad,\quad \bve_{t\in T}I_t=\sum_{t\in T}I_t,\]
\[\bwe_{t\in T}\CF_t=\bcap_{t\in T}\CF_t \quad,\quad \bve_{t\in T}\CF_t=\left\{\bcap_{d\in D}A_d:~D~\text{is a finite subset of }T, \quad A_d\in\CF_d\right\}.\]
Suppose that $I\in\CI(R)$ and $\CF\in\FF$. Define mappings $I:\FF\to\CI(R)$ with $I(\CF)=\I(\CF,I)$ and $ \CF:\CI(R)\to\CI(R)$ with $\CF(I)=\I(\CF,I)$. 

\begin{proposition}\label{F properites}
	Considering a filter $\CF$ on $\LA$ as map on $\CI(R)$ with $\CF(I)=\I(\CF,I)$, the following statements hold:
	\begin{mylist}
		\item $\CF$ is well-defined. \label{F is well}
		\item $\CF$ is an increasing map, i.e., if $I\sub J$, then $\I(\CF,\AI) \sub \I(\CF,\BI)$. \label{F is increasing}
		\item $\CF$ is extensive; i.e., $\CF(I)$ is an ideal containing $I$. \label{F is extensive}
		\item $\CF$ is idempotent; i.e., $\CF(\CF(I))=\CF(I)$ for every $I\in\CI(R)$. \label{F is idempotent}
		\item $\CF$ is a lattice homomorphism. \label{F is lattice homomorphism}
		\item $\CF$ preserves arbitrary joins. \label{F preserves joins}
		\item $\CF$ is nucleus. \label{F is nuclues}
		\item If $R$ is an arithmetical ring (i.e., $R_\la$'s are so), then $\CI(R)$ is a frame and consequently $\CF$ is a frame homomorphism. \label{F is frame homomorphism}
	\end{mylist}
\end{proposition}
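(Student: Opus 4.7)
The plan is to read (a)--(d) off Lemma \ref{I(F,I) properties} together with the multiplicativity of the idempotents $e_F$, bootstrap (e)--(g) from them, and finish (h) via Lemma \ref{distributive is frame}. Parts (a) and (c) are precisely Lemma \ref{I(F,I) properties}\ref{I is an ideal}, and (b) is immediate from the definition of $\I(\CF,\cdot)$. The only real computation is in (d): if $xe_{F^c}\in\CF(I)$ and $xe_{F^c}e_{G^c}\in I$, then by Lemma \ref{Z and e elementary propoeties}\ref{e cup} we have $e_{F^c}e_{G^c}=e_{(F\cap G)^c}$, so the single witness $F\cap G\in\CF$ puts $x$ in $\CF(I)$.

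For the meet half of (e), the inclusion $\CF(I\cap J)\sub\CF(I)\cap\CF(J)$ is (b), and the converse uses the same idempotent identity: given $xe_{F^c}\in I$ and $xe_{G^c}\in J$ one obtains $xe_{(F\cap G)^c}\in I\cap J$. The sum half is the main technical point. Given $x\in\CF(I+J)$ with $xe_{F^c}=a+b$ ($a\in I$, $b\in J$), I would decompose $x=xe_F+xe_{F^c}=(xe_F+a)+b$ using $e_F+e_{F^c}=1$ from Lemma \ref{Z and e elementary propoeties}\ref{e cap}, and observe that $xe_F$ sits in $\CF$ of \emph{every} ideal because $e_Fe_{F^c}=0$. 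Part (f) is the same argument applied to a finite representation of $xe_{F^c}\in\sum_t I_t$: pick one index $t_0$ and absorb $xe_F$ into the $\CF(I_{t_0})$ summand, while the remaining $a_{t_i}$ already lie in $I_{t_i}\sub\CF(I_{t_i})$. Part (g) is then immediate since (a)--(d) make $\CF$ a closure mapping and the meet half of (e) is exactly the nucleus identity.

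For (h) I would invoke Lemma \ref{distributive is frame} with $L=\CI(R)$: ideals form a cap-structure, and an upper directed family of ideals has sum equal to union (any finite sum of elements lies in a single member of the family), so the hypothesis of the lemma is satisfied. Arithmeticity of $R$ is by definition the distributivity of $\CI(R)$, hence $\CI(R)$ is a frame; combined with (e) and (f), $\CF$ preserves finite meets and arbitrary joins and is therefore a frame homomorphism. The main obstacle throughout is the join preservation in (e) and (f): the key trick is the splitting $x=xe_F+xe_{F^c}$, which lets the ``complementary'' tail $xe_F$ be absorbed into $\CF$ of any ideal, after which the remaining summands are already in the requested ideals.
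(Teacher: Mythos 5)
Your proposal is correct and follows essentially the same route as the paper: parts (a)--(d) via Lemma \ref{I(F,I) properties} and the identity $e_{F^c}e_{E^c}=e_{(F\cap E)^c}$, the join halves of (e)--(f) via the splitting $x=xe_{_F}+xe_{_{F^c}}$ with $xe_{_F}\in\CF(J)$ for any ideal $J$, and (h) via Lemma \ref{distributive is frame} applied to $\CI(R)$. The only (harmless) deviation is in (f), where you absorb $xe_{_F}$ directly into one summand of a finite representation of $xe_{_{F^c}}\in\sum_t I_t$, whereas the paper reduces to the finite case of (e) through the directed union $\bigcup_d\sum_{t\in d}I_t$; both arguments are valid, and yours avoids the unproved commutation of $\CF$ with directed unions that the paper uses implicitly.
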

\begin{proof}
	\ref{F is well} and \ref{F is extensive}. They follow from Lemma \ref{I(F,I) properties}\ref{I is an ideal}.
	
	\ref{F is increasing}. It is clear.
	
	\ref{F is idempotent}. Suppose that $J = \I(\CF,I)$. By part \ref{F is increasing}, we have $ J \sub \CF(J)$. Now suppose that $x \in \CF(J)$, then $F \in \CF$ exists such that $xe_{_{F^c}} \in J =\CF(I)$. It follows that there is $E \in \CF$ such that $xe_{_{F^c}}e_{_{E^c}} \in I$. This shows that $xe_{_{(F \cap E)^c}} \in I$. Since $F \cap E \in \CF$, it follows that $x \in J=\CF(I)$, by Lemma \ref{Z and e elementary propoeties}\ref{e cup}. Consequently, $J = \CF(J)$. Thus $\CF$ is idempotent.
	
	\ref{F is lattice homomorphism}. Since $\CF$ is increasing, it is clear that $\CF\left(\bcap_{i=1}^nI_i\right)\sub\bcap_{i=1}^n\CF(I_i)$. Now, suppose that $x\in\bcap_{i=1}^n\CF(I_i)$. Hence, for every $1 \leqslant i \leqslant n$, there exists $F_i\sub\CF$ such that $xe_{_{F_i}}\in I_i$. Obviously, $F=\bcap_{i=1}^n F_i\in\CF$ and, by Lemma \ref{Z and e elementary propoeties}\ref{e cup}, $xe_{_{(\bcap_{i=1}^nF_i)^c}}=xe_{_{\bcup_{i=1}^nF_i^c}}=x\prod_{i=1}^ne_{_{F_i}}\in I_j$ for every $1
	\leqslant j \leqslant n$. Therefore, $\bcap_{i=1}^n\CF(I_i)=\CF(\bcap_{i=1}^nI_i)$. 
	
	Now suppose that $I$ and $J$ are tow ideals of $R$. By part \ref{F is increasing}, $\CF(I) + \CF(J) \sub \CF(I+J)$. Let $x \in \CF(I+J)$, then $F \in \CF$ exists such that $xe_{_{F^c}} \in I + J \sub \CF(I) + \CF(J)$. Lemma \ref{I(F,I) properties}\ref{e_F in I} implies that $ x e_{_F} \in \CF(I) \sub \CF(I) + \CF(J) $. Hence
	\[x=x(e_{_{F^c}}+e_{_F})=xe_{_{F^c}}+xe_{_F}=ae_{_{F^c}}+be_{_{F^c}}+xe_{_F} \in \CF(I) + \CF(J) \]
	Thus the equality holds.
		 
	\ref{F preserves joins}. Suppose that $\{I_t\}_{t\in T}$ is a family of ideals of $R$. Considering $D$ as the family of  finite subsets of $T$, we can write:
	\begin{align*}
		\CF\Big(\bve_{t\in T}I_t\Big) 
			& =\CF\Big(\sum_{t\in T}I_t\Big)=\CF\Big(\bcup_{d\in D}\sum_{t\in d}I_t\Big)=\bcup_{d\in D}\CF\Big(\sum_{t\in d}I_t\Big) \\ 
			& =\bcup_{d\in D}\sum_{t\in d}\CF(I_t)=\sum_{t\in T}\CF(I_t)=\bve_{t\in T}\CF(I_t).
	\end{align*}
	
	\ref{F is nuclues}. It follows from parts \ref{F is increasing}, \ref{F is extensive}, \ref{F is idempotent} and \ref{F is lattice homomorphism}.
	
	\ref{F is frame homomorphism}. Since $R$ is arithmetical, $\CI(R)$ is distributive lattice and so, by Lemma \ref{distributive is frame}, it is a frame. Hence, by part (f), we are done.
\end{proof}

\begin{proposition} \label{I properties}
	Consider $I:\FF\to\CI(R)$ with $I(\CF)=\I(\CF,I)$. Then the mapping $I$ is increasing and preserves finite intersection, and also if $2$ is invertible, then it is a lattice homomorphism.
\end{proposition}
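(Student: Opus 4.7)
The plan is to verify three properties in turn: monotonicity of $I$, preservation of meets, and—under $2$ invertible—preservation of joins. Monotonicity will be immediate from the definition: if $\CF \sub \CG$ and $x \in \I(\CF,\AI)$, then any witness $F \in \CF \sub \CG$ of membership in $\I(\CF,\AI)$ also works for $\CG$, so $\I(\CF,\AI) \sub \I(\CG,\AI)$.

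For preservation of meets, my goal is $\I(\CF \cap \CG, \AI) = \I(\CF, \AI) \cap \I(\CG, \AI)$. The $\sub$ direction is monotonicity. For $\supseteq$, I will assume $x\e{F^c}, x\e{G^c} \in \AI$ with $F \in \CF$ and $G \in \CG$, and observe that $H := F \cup G$ belongs to both filters (they are upward closed), hence to $\CF \cap \CG$. Applying Lemma \ref{Z and e elementary propoeties}\ref{e cap} together with \ref{e cup}, I get $\e{H^c} = \e{F^c \cap G^c} = \e{F^c}\e{G} + \e{G^c}$, so $x\e{H^c} = \e{G}(x\e{F^c}) + x\e{G^c} \in \AI$, placing $x$ in $\I(\CF \cap \CG, \AI)$. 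Finite meets follow by induction.

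For join preservation $\I(\CF \vee \CG, \AI) = \I(\CF, \AI) + \I(\CG, \AI)$, the $\supseteq$ side is monotonicity together with the fact that $\I(\CF \vee \CG,\AI)$ is an ideal. For $\sub$, if $x \in \I(\CF \vee \CG, \AI)$ then there exist $F \in \CF$, $G \in \CG$ with $x\e{(F \cap G)^c} = x\e{F^c}\e{G^c} \in \AI$ (by Lemma \ref{Z and e elementary propoeties}\ref{e cup}). Using $2^{-1}$ and the identity $\e{F} + \e{F^c} + \e{G} + \e{G^c} = 2$ (from Lemma \ref{Z and e elementary propoeties}\ref{e cap}), I would write
\[ x = \tfrac{1}{2}(x\e{F} + x\e{G^c}) + \tfrac{1}{2}(x\e{F^c} + x\e{G}). \]
Multiplying the first bracket by $\e{F^c}$ yields $\tfrac{1}{2}x\e{F^c}\e{G^c} \in \AI$, placing it in $\I(\CF, \AI)$; the second bracket lies in $\I(\CG, \AI)$ symmetrically. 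The main obstacle is precisely this join step: one needs to split $x$ so that each piece has a witness in the appropriate filter, and the cleanest route uses $2^{-1}$ to symmetrise. I note in passing that the asymmetric decomposition $x = x\e{F} + x\e{F^c}$ succeeds without $2^{-1}$ (the first summand lies in $\I(\CF,\AI)$ since $(x\e{F})\e{F^c}=0$, and the second in $\I(\CG,\AI)$ since $x\e{F^c}\e{G^c} \in \AI$), so the hypothesis ``$2$ is invertible'' appears to enable a symmetric presentation rather than being indispensable.
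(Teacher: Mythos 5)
Your argument is correct and, in its main line, coincides with the paper's: monotonicity is immediate from the definition; for meets you take the union of the two witness sets and use the identity $\e{F^c\cap G^c}=\e{G}\e{F^c}+\e{G^c}$ from Lemma \ref{Z and e elementary propoeties}\ref{e cap}, which is exactly the paper's computation; and your symmetric join step is the paper's as well, since halving your two brackets is the same as the paper's identity $2x=x\e{B^c}+x\e{A}+x\e{A^c}+x\e{B}$ followed by division by $2$. The one genuine difference is your closing remark, and it is correct: the asymmetric decomposition $x=x\e{F}+x\e{F^c}$ already works, because $x\e{F}\in\I(\CF,\AI)$ (as $\e{F}\in\I(\CF,\AI)$ by Lemma \ref{I(F,I) properties}\ref{e_F in I} and $\I(\CF,\AI)$ is an ideal) and $x\e{F^c}\in\I(\CG,\AI)$ (as $(x\e{F^c})\e{G^c}=x\e{(F\cap G)^c}\in\AI$ with $G\in\CG$). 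Hence join preservation, and with it the lattice-homomorphism conclusion, holds with no hypothesis on $2$; the invertibility of $2$ is used in the paper (and in your symmetric version) only to divide the four-term identity by $2$, so your observation actually strengthens the proposition rather than merely restyling its proof.
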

\begin{proof}
	We show that the mapping $I$ preserves finite intersection. Let $\CF_1,\cdots,\CF_n$ be filters on $\LA$. Clearly, 
	\[ I\Big(\bigcap_{i=1}^n\CF_i\Big) = \I\Big(\bigcap_{i=1}^n\CF_i,I\Big) \sub \bigcap_{i=1}^n\I(\CF_i,I) = \bigcap_{i=1}^nI(\CF_i).\]
	Using induction, we prove the inverse inclusion for $n=2$. Suppose that $x\in I(\CF)\cap I(\CG)=\I(\CF,I)\cap\I(\CG,I)$. Thus, there exist $A\in\CF$ and $B\in\CG$ such that $xe_{_{A^c}}, xe_{_{B^c}}\in I$. Taking $F=A\cup B$, clearly $F\in\CF\cap\CG$ and hence by part \ref{e cap} of Lemma \ref{Z and e elementary propoeties}, we can write:
	\[xe_{_{F^c}} = xe_{_{A^c\cap B^c}} = x(e_{_{A^c}}+e_{_A}e_{_{B^c}})=xe_{_{A^c}} + xe_{_A}e_{_{B^c}} \in I \] 
	Consequently, $ x\in I(\CF\cap\CG)$. 
	
	Clearly, $ I(\CF) + I(\CG) \sub I(\CF\vee\CG)$. Now, suppose that $x\in I(\CF\vee\CG)$. Since $\CF\vee\CG=\{A\cap B: A\in\CF \ , \ B\in\CG\}$, it follows that there exist $A\in\CF$ and $B\in\CG$ such that $xe_{_{(A\cap B)^c}}=xe_{_{A^c}}e_{_{B^c}}\in I$. Hence, $xe_{_{B^c}}\in \CF(I)$ and $xe_{_{A^c}}\in I(\CG)$, and therefore, by Lemma \ref{I(F,I) properties}\ref{e_F in I}, we have:
	\[2x = x (e_{_{B^c}} + e_{_B}+e_{_{A^c}} + e_{_A}) = xe_{_{B^c}} + xe_{_A} + xe_{_{A^c}} + xe_{_B} \in I(\CF) + I(\CG).\]
	Since, $2$ is invertible, it follows that $x \in I(\CF) + I(\CG)$, and thus the equality holds.
\end{proof}

For every $A\sub R$ and every $\CF\sub P(\LA)$, we define $Z(A)=\{Z(a):~a\in A\}$ and $Z^{-1}\CF=\{x\in R:~Z(x)\in\CF\}$. It is easy to see that $Z(E(I))=\{Z\sub\LA:~e_{_Z}\in I\}$ for every ideal $I$ of $R$. Clearly, for every $e\in E(R)$, we have $e\in E(I)$ if and only if $Z(e)\in Z(E(I))$.  

\begin{lemma}\label{Z(E) properties}
	Suppose that $\CF$ and $\CG$ are two filters on $A$, $\AI$ and $\BI$ are two ideals of $R$ and $\{ I_i \}_{i=1}^n$ is a finite family of ideals of $R$. Then the following hold.
	\begin{mylist}
		\item If $I\sub J$, then $Z(E(I)) \sub Z(E(J))$. \label{Z preserves sub}
		\item $Z\left(E\big(\bigcap_{i=1}^n I_i\big)\right)=\bigcap_{i=1}^nZ(E(I_i))$. \label{Z preserves cap}
		\item $Z(E(I))$ is a filter on $A$. Also, $Z(E(I))$ is a proper filter \ff $\AI$ is a proper ideal.\label{Z(E(I)) is filter}				
		\item If for each $\la\in\LA$, $\AI_\la$ is a proper ideal of $R_\la$, then $Z\left(E(\I(\CF,\prod_{\la \in\LA}\AI_\la))\right)=\CF $. \label{Z(E(product)) = F}
		\item If for each $\la \in\LA $, $\AI_\la$ is a proper ideal of $R_\la$ and $\CF$ is a proper filter, then $\I\left(\CF , \prod_{\la\in\LA} \AI_\la \right)$ is a proper ideal. \label{I(product) is proper}
	\end{mylist}
\end{lemma}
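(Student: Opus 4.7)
I would prove the five parts in the order given, since the later ones combine the earlier with results from Lemma \ref{I(F,I) properties}. Parts \ref{Z preserves sub} and \ref{Z preserves cap} are essentially formal: monotonicity is transparent because $E(\AI)\sub E(\BI)$ whenever $\AI\sub\BI$, and the finite-intersection identity reduces to the observation that $e_{_Z}\in\bcap_{i=1}^n I_i$ \ff $e_{_Z}\in I_i$ for every $i$, which allows $E$ to be pushed through the intersection.

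The real content lies in \ref{Z(E(I)) is filter}, and I would verify the three filter axioms one by one. The whole set $\LA$ lies in $Z(E(\AI))$ because $e_{_\LA}=0\in\AI$. Closure under supersets uses Lemma \ref{Z and e elementary propoeties}\ref{e cup} to write $e_{_{Z_2}}=e_{_{Z_1}}e_{_{Z_2}}$ when $Z_1\sub Z_2$, so that $e_{_{Z_2}}\in\AI$ as soon as $e_{_{Z_1}}\in\AI$. Closure under binary intersections, the crux of the whole lemma, requires the identity $e_{_{Z_1\cap Z_2}}=e_{_{Z_2^c}}e_{_{Z_1}}+e_{_{Z_2}}$ from Lemma \ref{Z and e elementary propoeties}\ref{e cap}, which exhibits $e_{_{Z_1\cap Z_2}}$ as a sum of elements known to belong to $\AI$. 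The properness clause $\emp\in Z(E(\AI))\LRTA 1=e_{_\emp}\in\AI$ is then immediate.

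Parts \ref{Z(E(product)) = F} and \ref{I(product) is proper} should then fall into place. For \ref{Z(E(product)) = F}, the inclusion $\CF\sub Z(E(\I(\CF,\prod_{\la\in\LA}\AI_\la)))$ follows from Lemma \ref{I(F,I) properties}\ref{e_F in I} applied with $I=\prod_{\la\in\LA}\AI_\la$, while the reverse inclusion uses Lemma \ref{I(F,I) properties}\ref{x in I(product)} applied to $x=e_{_Z}$; here the properness of each $\AI_\la$ is essential, since it forces the set $\{\la:(e_{_Z})_\la\in\AI_\la\}$ to equal $Z$ exactly rather than some larger subset of $\LA$. Finally, \ref{I(product) is proper} follows by combining \ref{Z(E(product)) = F} with the properness clause of \ref{Z(E(I)) is filter}: if $\CF$ is proper, then $Z(E(\I(\CF,\prod_{\la\in\LA}\AI_\la)))=\CF$ is a proper filter, and therefore $\I(\CF,\prod_{\la\in\LA}\AI_\la)$ must be a proper ideal.

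The main obstacle is the binary-intersection clause of \ref{Z(E(I)) is filter}, where one must recognize that Lemma \ref{Z and e elementary propoeties}\ref{e cap} supplies exactly the algebraic decomposition needed; a secondary subtlety is the reverse inclusion in \ref{Z(E(product)) = F}, where the hypothesis that each $\AI_\la$ is proper is precisely what prevents extra indices from slipping into the set $\{\la:(e_{_Z})_\la\in\AI_\la\}$.
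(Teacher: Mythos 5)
Your proposal is correct and follows essentially the same route as the paper: parts \ref{Z preserves sub} and \ref{Z preserves cap} are routine, the filter axioms in \ref{Z(E(I)) is filter} hinge on exactly the identities $e_{_H}=e_{_F}e_{_H}$ and $e_{_{E\cap F}}=e_{_E}e_{_{F^c}}+e_{_F}$ from Lemma \ref{Z and e elementary propoeties}\ref{e cup},\ref{e cap}, and \ref{I(product) is proper} is deduced from \ref{Z(E(product)) = F} together with the properness clause of \ref{Z(E(I)) is filter}, just as in the paper. The only cosmetic difference is in \ref{Z(E(product)) = F}, where you invoke Lemma \ref{I(F,I) properties}\ref{e_F in I} and \ref{x in I(product)} (correctly noting that properness of the $\AI_\la$ pins $\{\la:(e_{_Z})_\la\in\AI_\la\}$ down to $Z$), while the paper unwinds the definition directly and uses properness to force $e_{_{Z\cup F^c}}=0$; these are the same argument in substance.
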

\begin{proof}
	\ref{Z preserves sub} and \ref{Z preserves cap}. They are straightforward.
	
	\ref{Z(E(I)) is filter}. $\e{X}=0\in I $, so $X\in Z(E(I))$. Let $F\in Z(E(I))$ and $F\sub H\sub A$. Thus $\e{F}\in I$ and so $\e{H}=\e{F}\e{H}\in I$. Consequently, $H\in Z(E(I))$. 
	
	Now suppose that $E,F\in Z(E(I))$. Hence, $\e{E}, \e{F}\in \AI $. Lemma \ref{Z and e elementary propoeties}\ref{e cap} concludes that 
	\[\e{E\cap F}=\e{E\cup F^c}+\e{F}=\e{E}\e{F^c}+\e{F}\in I\]
	Therefore, $E\cap F\in Z(E(I))$. To complete the proof, we can write:
	\[I=R\quad\LRTA\quad 1\in I\quad \LRTA\quad\e{\emp}\in I\quad\LRTA\quad \emp\in Z(E(I))\quad\LRTA\quad Z(E(I))=P(A)\]	
	
	\ref{Z(E(product)) = F}. Suppose that $Z \in Z\left(E(\I(\CF,\prod_{\la \in\LA} \AI_\la))\right)$, then $\e{Z}\in\I(\CF,\prod_{\la \in\LA}\AI_\la)$, so $F \in \CF$ exists such that $\e{z}\e{F^c}\in\prod_{\la \in\LA}\AI_\la$ and thus $\e{Z\cup F^c}\in\prod_{\la\in\LA}\AI_\la$, by Lemma \ref{Z and e elementary propoeties}\ref{e cup}. Since $\AI_\la$ is a proper ideal, for each $\la \in\LA$, it follows that $\e{Z\cup F^c}=0$, so $Z\cup F^c=X$, hence $F\sub Z$ and therefore $Z\in\CF$. Consequently, $Z\left(E(\I(\CF,\prod_{\la \in\LA} \AI_\la))\right) \sub \CF$. Now suppose that $F\in\CF$. By Lemma \ref{Z and e elementary propoeties}\ref{e cup}, $\e{F}\e{F^c}=\e{F \cup F^c}=\e{X}=0 \in \prod_{\la \in\LA} \AI_\la$. Hence $\e{F}\in\I(\CF,\prod_{\la \in\LA} \AI_\la )$ and thus $F\in Z\left(E(\I(\CF,\prod_{\la \in\LA}\AI_\la))\right)$. This implies that $Z\left(E(\I(\CF,\prod_{\la \in\LA} \AI_\la))\right)\supseteq \CF$ and therefore $Z\left(E(\I(\CF,\prod_{\la \in\LA} \AI_\la))\right)=\CF$.
	
	\ref{I(product) is proper}. By part \ref{Z(E(product)) = F}, $Z\left(E(\I\left(\CF,\prod_{\la \in\LA} \AI_\la)\right)\right)=\CF$ is a proper filter and thus $ \I\left(\CF,\prod_{\la \in\LA} \AI_\la\right) $ is proper ideal, by part \ref{Z(E(I)) is filter}.
\end{proof}

\begin{example} \ \\
	\begin{mylist}
		\item The inclusion relation of part \ref{I is an ideal} of Lemma \ref{I(F,I) properties} may be strict. To see this, suppose that $R_n=\BBR$ for every $n\in\BBN$ and $R=\prod_{n\in\BBN}R_n$. Assume that $I=\prod_{n\in\BBN}\{0\}$ and $\CF=\{A\sub\BBN:~1\in\LA\}$. Clearly, $\CF$ is a filter on $\BBN$ and $I=\{0\}\subsetneq\{0\}\times\prod_{n=2}^\infty\BBR=i(\CF,I)$.
		\item The converse of part \ref{F is increasing} of Proposition \ref{F properites}, in general, is not true. To see this, suppose that $R_n$ is a ring for every $n\in\BBN$, $R=\prod_{n\in\BBN}R_n$, $\CF$ is an ultrafilter on $\BBN$ such that
		$6\BBN\in\CF$, $I=<e_{2\BBN}>$, and $J=<e_{3\BBN}>$. One can see that $i(\CF,I)=R=i(\CF,J)$ while $I$ and $J$ are not comparable.
		\item By Proposition \ref{I properties}, $\CF \sub \CG$ implies that $I(\CF) \sub I(\CG)$, for each filters $\CF$ and $\CG$. The converse of this fact, in general, is not true. To see this, suppose that $R_n$ is a ring for every $n\in\BBN$, $R=\prod_{n\in\BBN}R_n$, and $\CF$ and $\CG$ are two distinct ultrafilter on $\BBN$ such that $2\BBN+1\in\CF\cap\CG$. Taking $A=2\BBN$ and $I=\Ge{e_{_A}}$, one can see that $i(\CF,I)=R=i(\CG,I)$ while $\CF$ and $\CG$ are not comparable.
	\end{mylist}
\end{example}

The following proposition shows that the primeness condition in part \ref{F sub G} of Lemma \ref{I(F,I) properties} is not necessary. 

\begin{proposition}
	Let $R=\prod_{\la\in\LA}R_\la$, $\CF$ and $\CG$ be two filter on $\LA$. Then $ \I(\CF,\{0\}) \sub \I(\CG,\{0\}) $ \ff $\CF\sub\CG$.
\end{proposition}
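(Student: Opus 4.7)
The plan is to prove both directions using results already established in the paper, with no new constructions needed. The statement essentially says that the map $\CF \mapsto \I(\CF,\{0\})$ is not just increasing (which holds in general) but actually order-reflecting on the full lattice of filters when the ideal is the zero ideal.

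For the backward direction ($\CF \sub \CG \RTA \I(\CF,\{0\}) \sub \I(\CG,\{0\})$), I would simply invoke Proposition \ref{I properties}, which asserts that the map $I : \FF \to \CI(R)$ sending $\CF \mapsto \I(\CF,I)$ is increasing. Applied to $I = \{0\}$, this gives the inclusion immediately. Alternatively, one can just unfold the definition: if $x \in \I(\CF,\{0\})$, there exists $F \in \CF$ with $x \e{F^c} = 0$; since $F \in \CG$ too, the same witness shows $x \in \I(\CG,\{0\})$.

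For the forward direction, I would argue as follows. Fix $F \in \CF$. By Lemma \ref{I(F,I) properties}\ref{e_F in I} applied with $I = \{0\}$, we have $\e{F} \in \I(\CF,\{0\})$ (indeed $\e{F}\e{F^c} = 0$ by Lemma \ref{Z and e elementary propoeties}\ref{e cup}, since $F \cup F^c = \LA$). By the assumed inclusion, $\e{F} \in \I(\CG,\{0\})$. Now invoke Lemma \ref{I(F,I) properties}\ref{I(F,0)}, which characterizes $\I(\CG,\{0\})$ as the set $\{x \in R : Z(x) \in \CG\}$. This forces $Z(\e{F}) \in \CG$, and since $Z(\e{F}) = F$ by the very definition of $\e{F}$, we conclude $F \in \CG$. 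Thus $\CF \sub \CG$.

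There is no real obstacle here; the statement is a direct corollary of the structural characterization of $\I(\CF,\{0\})$ provided in Lemma \ref{I(F,I) properties}\ref{I(F,0)}. The point of the proposition, as advertised in the sentence preceding it, is that the primeness hypothesis of Lemma \ref{I(F,I) properties}\ref{F sub G} (and the requirement that $\CG$ be an ultrafilter) can be dropped once one specializes to $I = \{0\}$, because the idempotents $\e{F}$ themselves are explicit witnesses bridging membership in the filter and membership in the ideal.
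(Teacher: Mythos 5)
Your proposal is correct and follows essentially the same route as the paper: both directions rest on the idempotent witnesses $\e{F}$, with the backward inclusion coming from the monotonicity of $\CF \mapsto \I(\CF,I)$. The only cosmetic difference is that you invoke the stated characterization $\I(\CG,\{0\})=\{x\in R:\ Z(x)\in\CG\}$ to conclude $F=Z(\e{F})\in\CG$, whereas the paper unfolds the same computation by hand (from $\e{F}\e{B^c}=0$ it deduces $B\sub F$ for some $B\in\CG$, hence $F\in\CG$).
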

\begin{proof}
	$\RTA$) Suppose that $A\in\CF$. Then, we can write:
	\begin{align*}
		e_{_A}e_{_{A^c}}=0  & \RTA \quad e_{_A}\in\I(\CF,0)\sub\I(\CG,0) \\
							& \RTA \quad \ex B \in \CG  \quad e_{_A}e_{_{B^c}}=0 \\
							& \RTA \quad \ex B\in\CG \quad e_{_{A\cup B^c}}=0 \\
							& \RTA \quad \ex B\in\CG \quad A\cup B^c=X \\
							& \RTA \quad \ex B\in\CG \quad B\sub A \quad \RTA \quad A\in\CG
	\end{align*}
	Thus $\CF\sub\CG$.
	
	$\LTA$) It follows from Proposition \ref{I(F,I) properties}.
\end{proof}

\begin{lemma}\label{32.5}
	Suppose that $x , y \in R $, $I$ is an ideal of $ R $ and $Y , Z \sub A$. Then
	\begin{mylist}
		\item $Z^{-1}\CF$ is a proper ideal of $R$ for every filter $\CF$ on $\LA$. \label{Z^-1 is filter}
		\item If $x_\la^2\geq0$ for every $x_\la\in R_\la$ (for example if $R_\la$ is an $f$-ring), then $Z(I)$ is a base for a filter for every ideal $I$ in $R$ (Now this question arises: when is $Z(I)$ a filter?). \label{Z(I) is a base}
	\end{mylist}
\end{lemma}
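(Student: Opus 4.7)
For part (a), I would verify directly that $Z^{-1}\CF$ satisfies the ideal axioms. Zero lies in $Z^{-1}\CF$ since $Z(0)=\LA\in\CF$. For additive closure, if $x,y\in Z^{-1}\CF$ then $Z(x),Z(y)\in\CF$ and hence $Z(x)\cap Z(y)\in\CF$; Lemma \ref{Z and e elementary propoeties}\ref{Z cap sub} places this intersection inside $Z(x+y)$, so $Z(x+y)\in\CF$ by upward closure of the filter. For absorption by an arbitrary $r\in R$, Lemma \ref{Z and e elementary propoeties}\ref{Z cup} gives $Z(x)\sub Z(rx)$, whence $Z(rx)\in\CF$. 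Properness reduces to the observation $Z(1)=\emp$ together with $\emp\notin\CF$, which is where I would invoke that $\CF$ is a proper filter.

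For part (b), given $x,y\in I$ the natural candidate for an element of $Z(I)$ sitting below both $Z(x)$ and $Z(y)$ is $Z(x^2+y^2)$; note that $x^2+y^2$ does lie in $I$ since ideals are closed under squaring and addition. I would establish the sharper identity
\[ Z(x^2+y^2)=Z(x)\cap Z(y). \]
The containment $\supseteq$ is immediate from Lemma \ref{Z and e elementary propoeties}\ref{Z cap sub}. For $\sub$, fix $\la$ with $x_\la^2+y_\la^2=0$; since both summands are non-negative in $R_\la$ and sum to zero, each vanishes. In an $f$-ring one can then conclude $x_\la=0=y_\la$, because $R_\la$ embeds into a product of totally ordered rings and in any totally ordered ring a nonzero element squares to something strictly positive. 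Together with $Z(0)=\LA\in Z(I)$ this verifies the two filter-base axioms: non-emptiness, and the existence of a common refinement of any two members inside $Z(I)$.

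The main obstacle is precisely the step $x_\la^2=0\RTA x_\la=0$: the hypothesis $x_\la^2\geq 0$ on its own is not literally enough, since a hypothetical square-zero nilpotent would satisfy $0\geq 0$ trivially without being zero, and then the identity $Z(x^2+y^2)=Z(x)\cap Z(y)$ could fail. The parenthetical ``for example if $R_\la$ is an $f$-ring'' signals that the full $f$-ring structure is what the argument relies on to cross this gap, via the embedding into totally ordered rings. Apart from this point, the proof is routine manipulation with the filter axioms and the algebraic identities already collected in Lemma \ref{Z and e elementary propoeties}; the parenthetical question about when $Z(I)$ is actually a filter (not just a base) is a separate issue that my plan does not address, and I would leave it to a subsequent analysis.
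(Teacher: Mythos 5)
Your part (a) is the paper's argument verbatim: $0\in Z^{-1}\CF$ via $Z(0)=\LA$, additive closure via Lemma \ref{Z and e elementary propoeties}\ref{Z cap sub} and upward closure of $\CF$, and absorption via $Z(x)\sub Z(rx)$. Your explicit properness step ($Z(1)=\emp\notin\CF$) is in fact a point the paper leaves unstated, and it is needed, since the paper allows improper filters; so here you are, if anything, more careful than the source.

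For part (b) you also follow the paper's route ($x^{2}+y^{2}\in I$ and $Z(x^{2}+y^{2})$ refining $Z(x)\cap Z(y)$), and the subtlety you flag is genuine: from $x_\la^{2}+y_\la^{2}=0$ and $x_\la^{2},y_\la^{2}\geqslant 0$ one only gets $x_\la^{2}=y_\la^{2}=0$, and the paper's one-line appeal to ``the hypothesis'' glosses over the passage to $x_\la=0$ exactly as you say. However, your proposed repair does not close the gap: it is false that in a totally ordered ring every nonzero element has strictly positive square. For instance $\R[\epsilon]/(\epsilon^{2})$ with the lexicographic order ($a+b\epsilon\geqslant 0$ iff $a>0$, or $a=0$ and $b\geqslant 0$) is a totally ordered ring, hence an $f$-ring in the sense of this paper, with $\epsilon>0$ and $\epsilon^{2}=0$; there the identity $Z(x^{2}+y^{2})=Z(x)\cap Z(y)$ really fails (take $x=y$ with all coordinates $\epsilon$). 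What would close the gap in your argument is reducedness of the $R_\la$ (no nonzero square-zero elements), not the $f$-ring structure. Better still, the statement needs no order hypothesis at all: by Lemma \ref{Z and e elementary propoeties}\ref{Z cap =}, for $x,y\in I$ the element $x+\e{Coz(x)}y$ lies in $I$ and satisfies $Z\big(x+\e{Coz(x)}y\big)=Z(x)\cap Z(y)$, so $Z(I)$ is closed under finite intersections for every ideal $I$ of every product of rings; moreover, if $Z(x)\sub W\sub\LA$ then $x\e{W}\in I$ and $Z(x\e{W})=W$, so $Z(I)$ is upward closed and is therefore always a filter, which also disposes of the parenthetical question you deferred.
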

\begin{proof}
	\ref{Z^-1 is filter}. Since $Z(0) = X \in \CF$, $0 \in Z^{-1}(\CF) \neq \emptyset$. Suppose that $x,y\in Z^{-1}(\CF)$, then $Z(x)\cap Z(y)\in\CF$. Thus, by Lemma \ref{Z and e elementary propoeties}\ref{Z cap sub},  $Z(x)\cap Z(y)\sub Z(x+y) \in \CF$. Hence $x+y\in Z^{-1}(\CF)$. Now suppose that $r\in R$ and $x\in Z^{-1}(\CF)$. Then $Z(x)\in \CF$, since $Z(x) \sub Z(rx)$, it follows that $Z(rx)\in\CF$ and consequently $rx\in Z^{-1}(\CF)$.
	
	\ref{Z(I) is a base}. Assume that  $Z(x),Z(y)\in Z(I)$ where $x,y\in I$. Clearly, $x^2+y^2\in I$ and by the hypothesis, $Z(x)\cap Z(y)=Z(x^2+y^2)\in Z(I)$.
\end{proof}

\begin{proposition}\label{maps I and F}
	Suppose that $ \mathcal{U} $ is an ultrafilter on $ \LA $ and $ \AI $ is an ideals of $ R $. Then the following hold.
	\begin{mylist}		
		\item If $ \MI_\la $ is a maximal ideal of $ R_\la $, for each $ \la \in\LA $, then $ \I(\mathcal{U},\prod_{\la \in\LA} \MI_\la) $ is a maximal ideal of $ R $. \label{I is maximal}
		
		\item If $I$ is a pseudoprime ideal, then $Z(E(I))$ is an ultrafilter. \label{F(I) is ultrafilter}
		
		\item There is some maximal ideal $M$ of $ R $ such that $ \mathcal{U}=Z(E(M)) $. \label{U=F(M)}
		
		\item If $ \MI $ is a maximal ideal, then $Z(E(M))$ is a fixed filter \ff there is some $ \beta \in\LA $ and $ \MI_\beta \in \Ma(R_\beta) $ such that $ \MI=\pi^{-1}_\beta (\MI_\beta) $. \label{F(M) is fixed}
	\end{mylist}
\end{proposition}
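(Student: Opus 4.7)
The plan is to handle the four parts in order, relying on Lemma \ref{I(F,I) properties} (properties of $\I(\CF,I)$), Lemma \ref{Z(E) properties} (properties of $Z(E(I))$), and the idempotent arithmetic of Lemma \ref{Z and e elementary propoeties}.

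For part (a), properness of $\I(\mathcal{U},\prod_{\la\in\LA} \MI_\la)$ follows from Lemma \ref{Z(E) properties}\ref{I(product) is proper}. To force maximality, suppose $J\supsetneq\I(\mathcal{U},\prod_{\la\in\LA} \MI_\la)$ and pick a witness $x\in J$ outside this ideal. By Lemma \ref{I(F,I) properties}\ref{x in I(product)}, $F=\{\la:x_\la\in \MI_\la\}\notin\mathcal{U}$, so $F^c\in\mathcal{U}$ since $\mathcal{U}$ is an ultrafilter. For each $\la\in F^c$, maximality of $\MI_\la$ supplies $y_\la$ with $x_\la y_\la\equiv 1\pmod{\MI_\la}$; set $y_\la=0$ for $\la\in F$. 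A componentwise check shows $yx-\e{F}\in\prod_{\la\in\LA}\MI_\la\sub J$, and since $x\in J$ gives $yx\in J$, one extracts $\e{F}\in J$. On the other hand $F^c\in\mathcal{U}$ combined with Lemma \ref{I(F,I) properties}\ref{e_F in I} puts $\e{F^c}\in J$ as well, so Lemma \ref{Z and e elementary propoeties}\ref{e cap} yields $1=\e{F}+\e{F^c}\in J$, contradicting properness of $J$.

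Part (b) is immediate: $Z(E(I))$ is already a filter by Lemma \ref{Z(E) properties}\ref{Z(E(I)) is filter}, and for any $A\sub\LA$ the identity $\e{A}\e{A^c}=0\in I$ together with pseudoprimeness of $I$ forces $\e{A}\in I$ or $\e{A^c}\in I$, that is $A\in Z(E(I))$ or $A^c\in Z(E(I))$, so the filter is an ultrafilter. Part (c) is then short: pick any $\MI_\la\in\Ma(R_\la)$ and set $\MI=\I(\mathcal{U},\prod_{\la\in\LA}\MI_\la)$; by part (a) this $\MI$ is maximal, and Lemma \ref{Z(E) properties}\ref{Z(E(product)) = F} gives $Z(E(\MI))=\mathcal{U}$.

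For part (d), the direction $(\LTA)$ is a direct computation: if $\MI=\pi_\beta^{-1}(\MI_\beta)$ and $e\in E(R)$, then $e\in \MI$ iff $e_\beta\in\MI_\beta$ iff $e_\beta=0$ (as $1\notin\MI_\beta$) iff $\beta\in Z(e)$, so $Z(E(\MI))$ is the principal ultrafilter at $\beta$, which is fixed. For $(\RTA)$, maximal ideals are prime, hence pseudoprime, so by part (b) the filter $Z(E(\MI))$ is an ultrafilter; if $\beta\in\bcap Z(E(\MI))$, then $Z(E(\MI))$ cannot contain $\{\beta\}^c$, forcing it to be the principal ultrafilter at $\beta$. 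In particular $\{\beta\}\in Z(E(\MI))$, i.e.\ $\e{\{\beta\}}\in \MI$; every $x\in R$ with $x_\beta=0$ satisfies $x=x\e{\{\beta\}}\in \MI$, so $\ker\pi_\beta\sub \MI$, and the correspondence theorem for the surjection $\pi_\beta:R\to R_\beta$ delivers $\MI=\pi_\beta^{-1}(\MI_\beta)$ with $\MI_\beta=\pi_\beta(\MI)$ maximal in $R_\beta$. The main obstacle lies in part (a), where one must concoct the correct pointwise ``inverse'' $y$ and keep track of the support swap between $\e{F}$ (zero on $F$, one on $F^c$) and $\e{F^c}$; the pivotal bookkeeping is to translate membership in $\I(\mathcal{U},\prod_\la \MI_\la)$ into an ultrafilter statement via Lemma \ref{I(F,I) properties}\ref{x in I(product)}, and to use Lemma \ref{I(F,I) properties}\ref{e_F in I} to deposit the complementary idempotent into the ideal at precisely the right moment.
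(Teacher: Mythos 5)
Your proof is correct and follows essentially the same route as the paper: properness plus inverses modulo $\MI_\la$ on $F^c$ and the splitting $1=\e{F}+\e{F^c}$ for (a), the pseudoprime dichotomy from $\e{Z}\e{Z^c}=0$ for (b), part (a) combined with Lemma \ref{Z(E) properties}\ref{Z(E(product)) = F} for (c) (which the paper leaves implicit), and the principal ultrafilter at $\beta$ giving $\e{\{\beta\}}\in\MI$ for (d). The only cosmetic deviations are that in (a) you show any strictly larger ideal contains both $\e{F}$ and $\e{F^c}$ instead of exhibiting the paper's explicit relation $sx+n=1$, and in (d)$(\RTA)$ you pass through $\ker\pi_\beta\sub\MI$ and the correspondence theorem where the paper shows $\pi_\beta(\MI)$ is proper and then invokes maximality of $\MI$; these are equivalent arguments.
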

\begin{proof}
	\ref{I is maximal}. Lemma \ref{Z(E) properties}\ref{I(product) is proper} implies that $ \I(\mathcal{U},\prod_{\la \in\LA} \MI_\la) $ is a proper ideal. Suppose that $ x \notin \I(\mathcal{U},\prod_{\la \in\LA} \MI_\la) $. Set $ F=\{ \la \in\LA : x_\la \in \MI_\la \} $. Since $ x \notin \I(\mathcal{U},\prod_{\la \in\LA} \MI_\la) $, Lemma \ref{I(F,I) properties}\ref{x in I(product)} implies that $ F \notin \mathcal{U} $ and thus $ F^c \in\CU$. For each $\la \in F^c $, $ x_\la \notin \MI_\la $, so $ r_\la \in R_\la $ and $ m_\la \in \MI_\la $ exist such that $ 1_\la=r_\la x_\la+m_\la $. Suppose that for each $\la\in\LA$
	\[ s_\la=\begin{cases}
		r_\la & \la \notin F \\
		1_\la & \la \in F
	\end{cases} \qquad \text{ and } \qquad
	n_\la=\begin{cases}
		m_\la 	        & \la \notin F \\
		1_\la - x_\la & \la \in F
	\end{cases}. \]
	Since $ F^c \in \mathcal{U} $, Lemma \ref{I(F,I) properties}\ref{x in I(product)} implies that $ n \in \I\left(\mathcal{U},\prod_{\la \in\LA} \MI_\la\right) $. It is easy to show that $ sx+n=1 $. Consequently, $ \I\left(\mathcal{U},\prod_{\la \in\LA} \MI_\la\right) $ is a maximal ideal.
	
	\ref{F(I) is ultrafilter}. Suppose  that $ Z \notin Z(E(I))$, then $ e_{_Z} \notin \AI $. Since $ e_{_Z} e_{_{Z^c}}=0 \in \AI $ and $ \AI $ is a pseudoprime ideal, it follows that $ e_{_{Z^c}} \in \AI $ and thus $ Z^c \in Z(E(I))$. It shows that $Z(R(I))$ is an ultrafilter.
	
	\ref{F(M) is fixed}$ \Rightarrow $. Since $ \MI $ is maximal, by part \ref{F(I) is ultrafilter}, it follows that $ Z(E(M))$ is an ultrafilter, since $Z(E(M))$ is a fixed filter, there is $ \beta \in\LA $ such that $Z(E(M))=\{ Z \subseteq A : \beta \in Z \} $ and therefore $ \e{\{\beta\}} \in \MI $.  If $ 1_\beta \in \pi_\la(\MI) $, then $ x \in \MI $ exists such that $ \pi_\beta(x)=1_\beta $ and thus
	\[ 1=\e{\{\beta\}}+\e{\{\beta\}^c}=\e{\{\beta\}}+x \e{\{\beta\}^c} \in \MI \]
	which is a contradiction. This shows that $ \pi_\beta(\MI) $ is a proper ideal of $ R_\beta $, so $ \MI_\beta \in \Ma(R_\beta) $ exists such that $ \pi_\beta(\MI) \subseteq \MI_\beta $ and therefore $ \MI \subseteq \pi_\beta^{-1}(\MI_\beta) $. Since $ \MI $ is a maximal ideal, it follows that $ \MI=\pi_\beta^{-1}(\MI_\beta) $. 
	
	\ref{F(M) is fixed}$ \Leftarrow $. Suppose that $ \beta \in\LA $ and $ \MI_\beta \in \Ma(R_\beta) $, then
	\begin{align*}
		Z \in Z(E(\pi^{-1}_\beta(\MI_\beta))
		& \quad \Rightarrow \quad \e{Z} \in \pi^{-1}_\la(\MI_\beta) \quad \Rightarrow
		\quad \pi_\beta(\e{Z}) \in \MI_\beta \\
		& \quad \Rightarrow \quad \pi_\beta(\e{Z})=0 \quad \Rightarrow \quad \beta \in
		Z(\e{Z})=Z  
	\end{align*} 
	This shows that $Z(E(\pi^{-1}_\beta(\MI_\beta)) $ is a fixed filter.
\end{proof}

\begin{theorem}
	If $ \{ R_\la \}_{\la \in\LA} $ is an infinite family of rings, then $ \left| \Ma(R) \right| \geqslant 2^{2^{|\LA|}} $
\end{theorem}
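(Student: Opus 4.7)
The plan is to exhibit an injection from the set $\beta\Lambda$ of ultrafilters on $\Lambda$ into $\Ma(R)$, and then invoke the classical Pospíšil–Hausdorff theorem that $|\beta\Lambda| = 2^{2^{|\Lambda|}}$ whenever $\Lambda$ is infinite.

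First, for each $\la \in \Lambda$ fix a maximal ideal $M_\la \in \Ma(R_\la)$ (this exists since every commutative ring with identity has a maximal ideal by Zorn's lemma). Set $\mathcal{M} = \prod_{\la \in \Lambda} M_\la$. Define a map
\[ \Phi : \{ \mathcal{U} : \mathcal{U} \text{ is an ultrafilter on } \Lambda \} \longrightarrow \Ma(R), \qquad \Phi(\mathcal{U}) = \I(\mathcal{U}, \mathcal{M}). \]
Proposition \ref{maps I and F}\ref{I is maximal} guarantees that $\Phi(\mathcal{U})$ really lies in $\Ma(R)$, so $\Phi$ is well-defined.

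Next I would verify that $\Phi$ is injective. Suppose $\mathcal{U}$ and $\mathcal{V}$ are ultrafilters on $\Lambda$ with $\Phi(\mathcal{U}) = \Phi(\mathcal{V})$. Applying $Z \circ E$ to both sides and using Lemma \ref{Z(E) properties}\ref{Z(E(product)) = F} (which applies because each $M_\la$ is a proper ideal of $R_\la$), we obtain
\[ \mathcal{U} = Z\bigl(E(\I(\mathcal{U}, \mathcal{M}))\bigr) = Z\bigl(E(\I(\mathcal{V}, \mathcal{M}))\bigr) = \mathcal{V}. \]
Hence $\Phi$ is one-to-one, so $|\Ma(R)| \geqslant |\beta\Lambda|$.

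Finally I would quote the Pospíšil–Hausdorff theorem: for any infinite set $\Lambda$ there are exactly $2^{2^{|\Lambda|}}$ ultrafilters on $\Lambda$. Combining this with the injection above yields $|\Ma(R)| \geqslant 2^{2^{|\Lambda|}}$. The only step that is not purely bookkeeping from earlier lemmas is the invocation of this cardinality estimate for $\beta\Lambda$; it is classical and can simply be cited (see, e.g., the standard references on Stone–Čech compactification already in the paper's bibliography). No step requires new calculation beyond assembling Proposition \ref{maps I and F}\ref{I is maximal} and Lemma \ref{Z(E) properties}\ref{Z(E(product)) = F}.
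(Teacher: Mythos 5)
Your proposal is correct and matches the paper's own argument: the same map $\mathcal{U}\mapsto\I(\mathcal{U},\prod_{\la\in\LA}M_\la)$, well-defined by Proposition \ref{maps I and F}\ref{I is maximal}, injective because $Z(E(\cdot))$ recovers the ultrafilter, combined with the classical count $2^{2^{|\LA|}}$ of ultrafilters (the paper cites Gillman--Jerison, Theorem 9.2, which is exactly the Posp\'i\v{s}il result you invoke). Your injectivity step via Lemma \ref{Z(E) properties}\ref{Z(E(product)) = F} is in fact the cleanest way to justify the one-to-one claim the paper makes.
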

\begin{proof}
	For each $ \la \in \LA $, suppose that $ \MI_\la $ is a fixed element of $ \Ma(R_\la) $ and $ \AI=\prod_{\la \in\LA} \MI_\la $. By Lemma \ref{Z(E) properties}\ref{Z preserves sub} and Proposition \ref{maps I and F}\ref{I is maximal}, $ I $ is a one to one map from the family of all ultrafilters on $ \LA $ into the family of all maximal ideal of $ R $ and thus, by \cite[Theroem 9.2]{Gillman1956Rings}, $ \left| \Ma(R) \right| \geqslant 2^{2^{|\LA|}} $.
\end{proof}

\begin{theorem}\label{cardinal of Max in product of locales}
	If $ \{ ( R_\la , \MI_\la) \}_{\la \in\LA} $ is an infinite family of local rings, then $ \left| \Ma(R) \right|=2^{2^{|\LA|}} $
\end{theorem}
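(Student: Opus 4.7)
My plan is to leverage the previous theorem, which already supplies $|\Ma(R)| \geqslant 2^{2^{|\LA|}}$. What remains is the matching upper bound, and for this I would exhibit an injection from $\Ma(R)$ into the collection of ultrafilters on $\LA$, whose cardinality is $2^{2^{|\LA|}}$ by \cite[Theorem 9.2]{Gillman1956Rings}.

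The map I would use is $\Phi : M \mapsto Z(E(M))$. By Proposition \ref{maps I and F}\ref{F(I) is ultrafilter} this really does land in the set of ultrafilters on $\LA$, since every maximal ideal is prime and hence pseudoprime. To prove $\Phi$ is one-to-one I would verify the sharper identity
\[ M \;=\; \I\!\left(\, Z(E(M))\, ,\, \prod_{\la \in \LA} \MI_\la \,\right), \]
which reconstructs $M$ from $\Phi(M)$. The right-hand side is a maximal ideal by Proposition \ref{maps I and F}\ref{I is maximal}, and $M$ is proper, so it suffices to prove the inclusion ``$\subseteq$''; equality will then follow automatically by maximality of the right-hand side.

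For the inclusion, I would pick $x \in M$ and set $S = \{\la \in \LA : x_\la \in \MI_\la\}$. By Lemma \ref{I(F,I) properties}\ref{x in I(product)} the task reduces to showing $S \in Z(E(M))$, i.e., $\e{S} \in M$. This is the single step where locality is essential: for every $\la \in S^c$ the coordinate $x_\la$ lies outside the unique maximal ideal $\MI_\la$ of $R_\la$, and is therefore a unit of $R_\la$. I would define $y \in R$ by $y_\la := x_\la^{-1}$ on $S^c$ and $y_\la := 0$ on $S$; a direct coordinate computation gives $yx = \e{S}$, and since $x \in M$ this forces $\e{S} \in M$.

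The main obstacle is precisely this last step: without locality there is no mechanism for promoting ``$x \in M$'' to ``$\e{S} \in M$'', and the entire correspondence between maximal ideals and ultrafilters would collapse. The trick of manufacturing $\e{S}$ as a product $yx$, made possible because every $R_\la$ is local, is what forces $M$ to be recoverable from the ultrafilter $Z(E(M))$ and hence delivers the desired bound $|\Ma(R)| \leqslant 2^{2^{|\LA|}}$.
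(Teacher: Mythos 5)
Your proposal is correct and is essentially the paper's own argument: the paper proves that $\mathcal{U}\mapsto\I\big(\mathcal{U},\prod_{\la\in\LA}\MI_\la\big)$ is a bijection from the ultrafilters on $\LA$ onto $\Ma(R)$, and its surjectivity step is exactly your reconstruction $\MI=\I\big(Z(E(\MI)),\prod_{\la\in\LA}\MI_\la\big)$, obtained by the same locality trick (the element $y$ with $y_\la=x_\la^{-1}$ off $S$ and $0$ on $S$, so that $yx=\e{S}\in \MI$), with the lower bound coming from the preceding theorem just as you use it. One small wording fix: the equality in the reconstruction follows from the maximality of $\MI$ itself together with the properness of the right-hand side (a merely proper ideal contained in a maximal one need not equal it), not from ``maximality of the right-hand side'' with $\MI$ only assumed proper.
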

\begin{proof}
	Set $ \AI=\prod_{\la \in\LA} \MI_\la $. By Lemma \ref{Z(E) properties}\ref{Z preserves sub} and and Proposition \ref{maps I and F}\ref{I is maximal}, $ \I $ is a one to one map from the family of all ultrafilters on $ \LA $ into the family of all maximal ideal of $ R $. Now suppose that $ \MI $ is a maximal ideal of $ R $, then $Z(E(M))$ is an ultrafilter, by Proposition \ref{maps I and F}\ref{F(I) is ultrafilter}. Suppose that $ x \in \MI $ and set $ F=\{\la \in\LA : x_\la \in \MI_\la \} $. If $ \la \notin F $, then $ x_\la \notin \MI_\la $, since $ R_\la $ is a local ring, it follows that $ x_\la $ is unit. Set
	\[ r_\la=\begin{cases}
		0               & \la \in F \\
		x_\la^{-1}   & \la \notin F
	\end{cases} \]
	Clearly, $ e=rx \in E(R) $ and $ Z(e)=F $ and thus $ F \in Z(E(M))\sub\mathcal{U} $. It is readily seen that $ x\e{F^c} \in \AI $ and therefore $ x \in I(\mathcal{U}) $. Consequently, $ \MI \subseteq I(\mathcal{U}) $, since $ \MI $ is a maximal ideal, it follows that $ \MI=I(\mathcal{U}) $. This shows that $ I $ is onto and thus, by \cite[Theroem 9.2]{Gillman1956Rings}, $ \left| \Ma(R) \right|=2^{2^{|\LA|}} $. 
\end{proof}

\section{The space of maximal ideals}

In this section, $ R $ denotes a unitary commutative ring. It is easy to see that $ \Ma(R) $ and $ \Ma\big( \frac{R}{\Jac(R)}\big) $ are homeomorphic. Henceforth we can assume that each ring $ R $ is semiprimitive. For each maximal ideal $ M $ we have $ h_M(\MI)=\{ \MI \} $ and therefore $ \Ma(R) $ is $ T_1 $. It is readily seen that if the intersection of a family $ \{ h_M(x) \}_{x \in S } $ is empty, then this family does not have finite intersection and thus $ \Ma(R) $ is compact. Hence $ \Ma(R) $ is $ T_2 $ \ff it is $ T_4 $. Also, \cite[Theorem 3.5]{Aliabad2021Commutative} deduces that $ \Ma(R) $ is $ T_4 $ \ff $ R $ is Gelfand.

Suppose that $ Y \subseteq \Sp(R) $, if $ \bigcap_{\PI \in  Y } \PI=\{ 0 \} $, then for each ideal $ \AI $ of $ R $ we  have
\begin{equation}
	\An(\AI)=( 0 : \AI )=\left( \bigcap_{\PI \in Y} \PI : \AI  \right)=\bigcap_{\PI \in Y} (\PI:\AI)=\bigcap_{\AI \not\subseteq \PI \in Y} \PI=k h_Y^c(I) \label{Ann(I)}
\end{equation}

It is easy to see that for each pair ideals  $ \AI $ and $ \BI $ of $ R $,

\begin{equation}\label{h_M subset h_M^c}
	h_M(\AI) \subseteq h_M^c(\BI) \text{ \ff }  \AI+\BI=R  
\end{equation}	
and 
\begin{equation}\label{h_M^c subset h_M}
	h_M^c(\AI) \subseteq h_M(\BI) \text{ \ff }  \AI\BI=\{ 0 \}
\end{equation} 

\begin{theorem}\label{Max is T4}
	$ R $ is Gelfand \ff for each pair of comaximal ideals $ (\AI,\BI) $ of $ R $ there is an ideal $ \CI $ such that $ (\AI,\BI) $ and $ \left(\An(\CI),\BI\right) $ are two pairs of comaximal ideals of $ R $.  
\end{theorem}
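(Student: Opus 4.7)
The plan is to read the condition as a ring-theoretic translation of the normality of $\Ma(R)$, then invoke \cite[Theorem 3.5]{Aliabad2021Commutative} (recalled in the paragraph preceding the theorem): $R$ is Gelfand iff $\Ma(R)$ is $T_4$. Since $\Ma(R)$ is $T_1$ and compact, $T_4$ reduces to normality, i.e., any two disjoint closed subsets can be separated by disjoint open sets. The dictionary is supplied by the paper: arbitrary open sets of $\Ma(R)$ are precisely of the form $h_M^c(K)$ for some ideal $K$; by \eqref{h_M subset h_M^c}, $h_M(\AI)\subseteq h_M^c(\CI)$ is the same as $\AI+\CI=R$; and since every maximal ideal is prime and $\Jac(R)=0$, the product formula $h_M^c(\CI)\cap h_M^c(\CL)=h_M^c(\CI\CL)$ shows that two basic opens are disjoint iff $\CI\CL=0$. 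Under the intended reading of the statement, the ideal $\CI$ plays the role of the open neighbourhood of $h_M(\AI)$ and $\An(\CI)$ the role of the open neighbourhood of $h_M(\BI)$.

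For the forward direction, assume $R$ is Gelfand. Given comaximal ideals $\AI,\BI$, the closed sets $h_M(\AI),h_M(\BI)$ satisfy $h_M(\AI)\cap h_M(\BI)=h_M(\AI+\BI)=\emp$. By normality, choose ideals $\CI,\CL$ with $h_M(\AI)\sub h_M^c(\CI)$, $h_M(\BI)\sub h_M^c(\CL)$, and $h_M^c(\CI)\cap h_M^c(\CL)=\emp$. Using the dictionary, these become $\AI+\CI=R$, $\BI+\CL=R$, and $\CI\CL=\{0\}$. The last equality forces $\CL\sub\An(\CI)$, whence $R=\BI+\CL\sub\BI+\An(\CI)$, so $\An(\CI)+\BI=R$, giving the desired $\CI$.

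Conversely, suppose the condition holds. To verify normality, take disjoint closed sets $F_1,F_2$ of $\Ma(R)$ and set $\AI=kh_M(F_1)$, $\BI=kh_M(F_2)$, so $F_i=h_M(\AI_i)$ and $h_M(\AI+\BI)=\emp$ forces $\AI+\BI=R$. The hypothesis yields $\CI$ with $\AI+\CI=R$ and $\An(\CI)+\BI=R$. Let $U_1=h_M^c(\CI)$ and $U_2=h_M^c(\An(\CI))$; then $F_1\sub U_1$ and $F_2\sub U_2$ by \eqref{h_M subset h_M^c}, and $U_1\cap U_2=h_M^c(\CI\cdot\An(\CI))=h_M^c(\{0\})=\emp$. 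Thus $\Ma(R)$ is $T_4$, and $R$ is Gelfand by \cite[Theorem 3.5]{Aliabad2021Commutative}.

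The only non-routine point is the passage in the forward direction from a general open neighbourhood $U$ of $h_M(\AI)$ to one of the specific shape $h_M^c(\CI)$; this is handled by writing $U$ as a union of basic opens $\bigcup_i h_M^c(x_i)=h_M^c(\sum_i(x_i))$, so $\CI=\sum_i(x_i)$ works. Everything else is a direct application of the ring-ideal/topology dictionary \eqref{Ann(I)}, \eqref{h_M subset h_M^c}, \eqref{h_M^c subset h_M} together with semiprimitivity of $R$, which is legitimately assumed at the start of the section.
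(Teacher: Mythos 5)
Your proposal is correct and takes essentially the same route as the paper: both reduce the theorem to normality of $\Ma(R)$ via \cite[Theorem 3.5]{Aliabad2021Commutative} (with the same intended reading $\AI+\CI=R$ and $\An(\CI)+\BI=R$ of the condition) and translate through the hull--kernel dictionary \eqref{h_M subset h_M^c}, \eqref{h_M^c subset h_M} under the section's standing semiprimitivity assumption. The only difference is cosmetic: the paper invokes the ``shrink a closed set inside an open set'' form of normality and computes $\overline{h_M^c(\CI)}=h_Mkh_M^c(\CI)=h_M\left(\An(\CI)\right)$ directly from \eqref{Ann(I)}, whereas you separate disjoint closed sets by disjoint open sets and recover $\An(\CI)$ from $\CI\CL=\{0\}$ and $\CI\An(\CI)=\{0\}$; the two formulations are interchangeable.
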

\begin{proof}
	\cite[Theorem 3.5]{Aliabad2021Commutative} concludes that $ \Ma(R) $ is $ T_4 $ \ff $ R $ is Gelfand. By the facts \eqref{Ann(I)}, \eqref{h_M subset h_M^c} and \eqref{h_M^c subset h_M},  $ \Ma(R) $ is $ T_4 $ \ff for all pair of ideals $ \AI $ and $ \BI $ of $ R $
	\begin{align*}
		& \quad h_M(\AI) \subseteq h_M^c(\BI) \quad \Rightarrow \quad \exists \CI \leqslant R \quad h_M(\AI) \subseteq h_M^c(\AI) \subseteq h_M^c(\CI) \subseteq \overline{h_M^c(\CI)} \subseteq h_M^c(\BI) \\
		\equiv & \quad \AI+\BI=R \quad \Rightarrow \quad \exists \CI \leqslant R \quad \AI+\CI=R \quad \text{ and } \quad h_Mkh_M^c(\CI) \subseteq h_M^c(\BI) \\
		\equiv & \quad \AI+\BI=R \quad \Rightarrow \quad \exists \CI \leqslant R \quad \AI+\CI=R \quad \text{ and } \quad h_M\left(\An(\CI)\right) \subseteq h_M^c(\BI) \\
		\equiv & \quad \AI+\BI=R \quad \Rightarrow \quad \exists \CI \leqslant R \quad \AI+\CI=R \quad \text{ and } \quad \An(\CI)+\BI=R
	\end{align*}
	It completes the proof.
\end{proof}

\begin{lemma}\label{limit and interior point}
	Suppose that $ \mathcal{A} \subseteq Y \subseteq \Ma(R) $. For each $ \MI \in Y $,
	\begin{mylist}
		\item $ \MI \in \Cl_Y \mathcal{A}  $ \ff $ \MI \in h_Yk(\mathcal{A}) $. \label{Cl in Y}
		\item $ \MI \in \In_Y \mathcal{A} $ \ff $ \MI \in h_Y(x) \subseteq h_Y^c(1-x) \subseteq \mathcal{A} $, for some $ x \in R $. \label{Int in Y}
		\item $ \MI  \in \mathcal{A}' $ \ff $ \MI $ can be omitted in $ \bigcap \mathcal{A} $. \label{Lim in Y}
	\end{mylist}
\end{lemma}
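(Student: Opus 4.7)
For part \ref{Cl in Y}, the plan is to identify the closure of $\mathcal{A}$ in the ambient space $\Ma(R)$ first and then intersect with $Y$. Since the closed sets of the Zariski topology are exactly the sets of the form $h_M(S)$, the closure of $\mathcal{A}$ is the intersection of all such closed sets containing $\mathcal{A}$. If $\mathcal{A}\sub h_M(S)$, then every $N\in\mathcal{A}$ contains $S$, so $S\sub k(\mathcal{A})$ and hence $h_M(k(\mathcal{A}))\sub h_M(S)$; conversely, $\mathcal{A}\sub h_M(k(\mathcal{A}))$ trivially. Thus $\Cl_{\Ma(R)}\mathcal{A}=h_M(k(\mathcal{A}))$, and intersecting with $Y$ gives $\Cl_Y\mathcal{A}=h_Y(k(\mathcal{A}))$.

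For part \ref{Int in Y}, I would work with the basic open sets $h_Y^c(y)=\{N\in Y:y\notin N\}$ of the induced topology on $Y$. Note first that the middle inclusion $h_Y(x)\sub h_Y^c(1-x)$ is automatic for any $x\in R$, since a maximal (hence proper) ideal cannot contain both $x$ and $1-x$. The direction ($\LTA$) is then immediate: the given chain displays $h_Y^c(1-x)$ as an open neighbourhood of $M$ contained in $\mathcal{A}$, so $M\in\In_Y\mathcal{A}$. For ($\RTA$), assume $M\in\In_Y\mathcal{A}$ and choose $y\in R$ with $M\in h_Y^c(y)\sub\mathcal{A}$. Then $y\notin M$, and by maximality there exist $r\in R$ and $m\in M$ with $ry+m=1$. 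Setting $x:=m=1-ry$ yields $x\in M$ (so $M\in h_Y(x)$) and $1-x=ry$; since every element of $Y$ is a prime ideal, any $N\in Y$ with $ry\notin N$ satisfies $y\notin N$, so $h_Y^c(1-x)\sub h_Y^c(y)\sub\mathcal{A}$. The main obstacle is producing this witness $x$ from the given basic open neighbourhood, which requires exploiting maximality to rewrite $y$ as $1-m$ up to a multiplier.

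For part \ref{Lim in Y}, I would use the general topological identity $\mathcal{A}'=\Cl_Y(\mathcal{A}\set\{M\})$ evaluated at $M$, so $M\in\mathcal{A}'$ iff $M\in\Cl_Y(\mathcal{A}\set\{M\})$. Applying part \ref{Cl in Y} to the set $\mathcal{A}\set\{M\}$ translates this into $k(\mathcal{A}\set\{M\})\sub M$, i.e., $\bcap_{M\neq N\in\mathcal{A}}N\sub M$. This last condition is precisely the assertion that removing $M$ from $\mathcal{A}$ does not alter the intersection $\bcap\mathcal{A}$, which is the meaning of ``$M$ can be omitted from $\bcap\mathcal{A}$'' in the fixed-place terminology recalled in the preliminaries. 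Hence (c) follows directly from (a) with no extra work beyond unpacking definitions.
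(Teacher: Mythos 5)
Your proposal is correct and follows essentially the same route as the paper: part (a) by identifying $\Cl_Y\mathcal{A}=h_Yk(\mathcal{A})$, part (b) with the very same witness $x=1-ry$ obtained from maximality of $\MI$, and part (c) by applying part (a) to $\mathcal{A}\setminus\{\MI\}$ and reading off that the intersection is unchanged. The only (harmless) difference is that you spell out the closure computation and the automatic inclusion $h_Y(x)\subseteq h_Y^c(1-x)$, which the paper dismisses as clear, and your appeal to primeness in showing $h_Y^c(ry)\subseteq h_Y^c(y)$ is unnecessary (being an ideal suffices).
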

\begin{proof}
	\ref{Cl in Y} and \ref{Int in Y} $ \Leftarrow $. They are clear.
	
	\ref{Int in Y} $ \Rightarrow $. Since $ \MI \in \In_Y \mathcal{A} $, there is some $ y \in R $ such that $ \MI \in h_Y^c(y) \subseteq \mathcal{A} $. Since $ \MI $ is a maximal ideal, $ r \in R $ exists such that $ 1-ry \in M $. Set $ 1-ry=x $, then it is readily seen that $ \MI \in h_Y(x) \subseteq h_Y^c(1-x)=h_Y^c(ry) \subseteq h_Y^c(y) \subseteq \mathcal{A}  $.
	
	\ref{Lim in Y}. By part \ref{Cl in Y},
	\begin{align*}
		\MI \in \mathcal{A}' & \Leftrightarrow \quad \MI \in \mathrm{Cl}_Y \big( \mathcal{A} \setminus \{ \MI \} \big) \\
		& \Leftrightarrow \quad \MI \in h_Y k \big( \mathcal{A} \setminus \{ \MI \} \big) \\
		& \Leftrightarrow \quad \bigcap_{m \neq n \in \mathcal{A}} \NI \subseteq \MI \\
		& \Leftrightarrow \quad \bigcap_{m \neq n \in \mathcal{A}} \NI \subseteq \bigcap_{n \in \mathcal{A}} \NI
	\end{align*} 
\end{proof}

By the fact (\ref{h_M^c subset h_M}), we have $ h_M^c(x) \subseteq h_M(\An(x)) $, for every $ x \in R $. But the equality need not be  hold. For example consider $ R=\Z $, then $ h_M^c(2)=\M \setminus \big\{ \Ge{2} \big\} \neq \M=h_M(\{0\})=h_M(\An(2)) $. In the following theorem, we  show that the equality holds \ff $ R $ is a von Neumann regular ring.

\begin{proposition}
	$ R $ is a von Neumann regular ring \ff $ h_M^c(x)=h_M(\An(x)) $, for every $ x \in R $.
\end{proposition}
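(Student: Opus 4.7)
The plan is to prove both directions directly, leveraging the already-noted inclusion $h_M^c(x) \subseteq h_M(\An(x))$ (which comes from fact \eqref{h_M^c subset h_M} applied to $(x) \cdot \An(x) = \{0\}$). The forward direction exploits the quasi-inverse provided by regularity to force $1 - xy$ into $\An(x)$; the reverse direction reads the set-theoretic equality back into the ideal $(x) + \An(x)$, showing that it cannot be proper.

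For the forward direction, assume $R$ is von Neumann regular. Given $x \in R$, pick $y$ with $x = x^2 y$. Then $x(1 - xy) = 0$, so $1 - xy \in \An(x)$. Now suppose $M \in h_M(\An(x))$. Then $1 - xy \in M$, and since $1 \notin M$ we must have $xy \notin M$, which forces $x \notin M$, i.e.\ $M \in h_M^c(x)$. Combined with the known inclusion this gives $h_M^c(x) = h_M(\An(x))$.

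For the reverse direction, assume the equality holds and fix $x \in R$. Consider $I = (x) + \An(x)$. If $I$ were a proper ideal, it would sit inside some $M \in \Ma(R)$; but then $M \in h_M(x)$ and $M \in h_M(\An(x)) = h_M^c(x)$, whereas $h_M(x) \cap h_M^c(x) = \emptyset$, a contradiction. Hence $(x) + \An(x) = R$, so one can write $1 = rx + a$ with $a \in \An(x)$. Multiplying by $x$ gives $x = rx^2 + ax = rx^2$ (since $ax = 0$), so $R$ is von Neumann regular.

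I do not expect any serious obstacle: the proof is essentially a two-line dictionary argument, the only subtlety being that one must remember to use $1 \notin M$ in the forward direction and that $(x) + \An(x)$ — rather than $(x)$ or $\An(x)$ separately — is the ideal to test for properness in the backward direction.
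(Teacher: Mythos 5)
Your proof is correct, and it takes a genuinely different, more self-contained route than the paper. The paper settles the forward direction by citing the identification $\Ma(R)=\Mi(R)$ for von Neumann regular rings together with a theorem of Henriksen and Jerison, and for the converse it shows that every maximal ideal $M$ is a minimal prime (from $x\in M$ and the assumed equality it extracts some $y\in\An(x)\setminus M$ with $xy=0$) and then concludes regularity, an inference that leans on the section's standing reduction to semiprimitive (hence reduced) rings, where zero-dimensional plus reduced gives von Neumann regular. You instead argue both directions by direct computation: in the forward direction the quasi-inverse gives $1-xy\in\An(x)$, which forces every maximal ideal containing $\An(x)$ to miss $x$, and combined with the inclusion $h_M^c(x)\subseteq h_M(\An(x))$ this yields the equality; in the converse you test the ideal $(x)+\An(x)$ for properness and read off $x=rx^2$ from $1=rx+a$ with $a\in\An(x)$. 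What your approach buys is an elementary argument that needs no external citation and no reducedness or semiprimitivity hypothesis, so it establishes the equivalence for an arbitrary commutative ring with identity; the paper's route is shorter on the page but outsources the forward direction and is tied to the quotient-by-$\Jac(R)$ convention made at the start of the section.
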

\begin{proof}
	($ \Rightarrow $). By this fact that $ \Ma(R)=\Mi(R) $ and \cite[Theorem 2.3]{Henriksen1965Space}, it is clear.
	
	($\Leftarrow $).  Suppose that $ \MI $ is a maximal ideal of $ R $. If $ x \in \MI $, then $ \MI \in h_M(x) $, so $ \MI \notin h_M^c(x)=h_M(\An(x)) $, thus there  is some $ y \in \An(x) \setminus \MI $, hence $ y \notin \MI $ exists such that $ xy=0 $. Consequently, $ \MI $ is a minimal prime ideal and therefore $ R $ is a von Neumann regular ring.
\end{proof}

\begin{theorem}\label{isolated point in Max}
	Suppose that $ \MI \in \Ma(R) $. The following statements are equivalent.
	\begin{mylist}
		\item $ \MI $ is an isolated point of $ \Ma(R) $. \label{Isolated}
		\item $ \MI $ can not be omitted in $ \bigcap \Ma(R) $. \label{Omit}
		\item $ \MI=\An(x) $, for some $ x \in R $. \label{M=Ann}
		\item $ \MI \in \mathcal{B}(R) $. \label{M in B(R)}
	\end{mylist}
\end{theorem}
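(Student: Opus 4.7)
The strategy is to establish (i)$\Leftrightarrow$(ii), (i)$\Leftrightarrow$(iii), and the cycle (iii)$\Rightarrow$(iv)$\Rightarrow$(ii). The running tool will be equation~\eqref{Ann(I)}, $\An(\AI) = kh_M^c(\AI)$, which applies because $R$ is assumed semiprimitive, so $\bigcap\Ma(R) = \{0\}$; this same hypothesis makes $R$ reduced, since the nilradical sits inside $\Jac(R) = \{0\}$.

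For (i)$\Leftrightarrow$(ii), since $\Ma(R)$ is $T_1$, $\MI$ is isolated \ff it is not a limit point of $\Ma(R)$. Applying Lemma~\ref{limit and interior point}\ref{Lim in Y} with $Y=\mathcal{A}=\Ma(R)$ identifies limit points with those maximal ideals that can be omitted from $\bigcap\Ma(R)$; negating delivers~(ii).

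For (i)$\Rightarrow$(iii), isolation of $\MI$ yields some $x\in R$ with $\MI\in h_M^c(x)\subseteq\{\MI\}$, i.e., $\{\MI\}=h_M^c(x)$, so $\MI = k\{\MI\} = kh_M^c(x) = \An(x)$ by~\eqref{Ann(I)}. For (iii)$\Rightarrow$(i), given $\MI=\An(x)$, note $x\neq 0$ (else $\An(x)=R$) and $x\notin \MI$ (else $x^2=0$, and reducedness forces $x=0$). For any $\NI\in\Ma(R)\setminus\{\MI\}$, if $x\notin \NI$ then $\NI+\langle x\rangle=R$, so $1=n+rx$ for some $n\in \NI$, $r\in R$; then each $y\in \MI=\An(x)$ satisfies $y=yn+yrx=yn\in \NI$, forcing $\MI\subseteq \NI$ and hence $\MI=\NI$, a contradiction. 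Thus $h_M^c(x)=\{\MI\}$ is open, so $\MI$ is isolated.

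The least routine step is the bridge between (iii) and (iv), because (iv) refers to $\Mi(R)$ while (iii) refers to $\Ma(R)$. Given $\MI=\An(x)$ as in~(iii), I first show $\MI\in\Mi(R)$: for any prime $\PI\subseteq \MI$, $x\MI=\{0\}\subseteq \PI$ combined with primeness of $\PI$ forces $x\in \PI$ or $\MI\subseteq \PI$, but $x\notin \MI\supseteq \PI$ rules out the first, so $\MI=\PI$, showing $\MI$ is minimal. Next, for $\PI\in\Mi(R)\setminus\{\MI\}$, the alternative $\MI\subseteq \PI$ is killed by maximality of $\MI$ and properness of $\PI$, so $x\in \PI$; hence $x\in\bigcap_{\MI\neq \PI\in\Mi(R)}\PI$, and since $x\neq 0=\sqrt{\{0\}}$, $\MI\in\mathcal{B}(R)$. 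Conversely, from~(iv) pick $0\neq x\in\bigcap_{\MI\neq \PI\in\Mi(R)}\PI$; any $\NI\in\Ma(R)\setminus\{\MI\}$ contains some $\PI\in\Mi(R)$, and $\PI=\MI$ would give $\MI\subseteq \NI$ — impossible — so $\PI\neq \MI$ and $x\in \PI\subseteq \NI$, witnessing (ii). The main obstacle is precisely this passage between $\Mi(R)$ and $\Ma(R)$, hinging on the observation that isolated maximal ideals are automatically minimal primes.
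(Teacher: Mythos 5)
Your proof is correct, and it diverges from the paper's in an interesting way. The equivalence of \ref{Isolated} and \ref{Omit} is handled exactly as in the paper, via Lemma \ref{limit and interior point}\ref{Lim in Y}, and your passage from \ref{Isolated} to \ref{M=Ann} uses the same key identity $\An(x)=kh_M^c(x)$ from \eqref{Ann(I)} that the paper exploits (the paper runs it as \ref{Omit} $\Rightarrow$ \ref{M=Ann}, picking $x\in\bigcap_{\MI\neq\NI}\NI\setminus\MI$, which is essentially the same computation). The real difference is the link with \ref{M in B(R)}: the paper disposes of \ref{M=Ann} $\Rightarrow$ \ref{M in B(R)} and \ref{M in B(R)} $\Rightarrow$ \ref{Isolated} by citing Theorem 2.1 of \cite{Aliabad2013Fixedplace}, which characterizes Bourbaki associated prime divisors through annihilators, whereas you work straight from the definition of $\mathcal{B}(R)$: you first observe that $\MI=\An(x)$ forces $\MI$ to be a minimal prime (using reducedness, which you correctly extract from semiprimitivity), then show $x$ lies in every other minimal prime, and conversely pull a nonzero element of $\bigcap_{\MI\neq\PI\in\Mi(R)}\PI$ into every maximal ideal other than $\MI$ via the standard fact that each maximal ideal contains a minimal prime. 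Your route costs a bit more writing but is self-contained, avoids the external reference, supplies a direct elementary proof of \ref{M=Ann} $\Rightarrow$ \ref{Isolated} that the paper only obtains through the citation, and makes explicit the observation that an isolated point of $\Ma(R)$ is automatically a minimal prime — which incidentally re-proves Corollary \ref{connecton between isolated points}. The paper's version buys brevity and situates the result inside the fixed-place machinery of \cite{Aliabad2013Fixedplace}.
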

\begin{proof}
	\ref{Isolated} $ \Leftrightarrow $ \ref{Omit}. Since $ \MI $ is an isolated point of $ \Ma(R) $, it follows that $ \MI \notin \Ma(R)' $. Now Lemma \ref{limit and interior point} concludes that  $ \MI $ can not be omitted in $ \bigcap \Ma(R) $.
	
	\ref{Omit} $ \Rightarrow $ \ref{M=Ann}. Since $ \MI $ can not be omitted in $ \bigcap \Ma(R) $, it follows that $ \bigcap_{\MI \neq \NI \in \Ma(R)} \NI \not\subseteq \MI $, so $ x \in \bigcap_{\MI \neq \NI \in \Ma(R)} \NI \setminus \MI $ exists and thus $ \An(x)=kh_M^c(x)=\MI $.
	
	\ref{M=Ann} $ \Rightarrow $ \ref{M in B(R)}. Since $ \An(x) $ is a maximal ideal, $ \An(x) $ is an affiliated prime ideal, so $ \MI \in \mathcal{B}(R) $, by \cite[Theorem 2.1]{Aliabad2013Fixedplace}.
	
	\ref{M in B(R)} $ \Rightarrow $ \ref{Isolated}. By \cite[Theorem 2.1]{Aliabad2013Fixedplace}, there is some $ x \in R $ such that $ \An(x)=\MI $, since $ \An(x)=kh_M^c(x) $, it follows that $ h_M^c(x)=\{ \MI \} $ and thus $ \MI $ is an isolated point.
\end{proof}

The following corollaries are immediate consequences of the above theorem, \cite[Corollaries 4.2 and 4.3]{Aliabad2013Fixedplace} and \cite[Theorem 4.6]{Aliabad2013Fixedplace}.

\begin{corollary}
	The following statement are equivalent.
	\begin{mylist}
		\item $ \Ma(R)=\mathcal{B}(R) $. \label{Max=B}
		\item $ \Ma(R) $ is discrete. \label{Max discrete}
		\item $ \Ma(R) $ is a fixed-place family. \label{Max is fixed-place}
	\end{mylist}
\end{corollary}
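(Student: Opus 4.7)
The plan is to derive all three equivalences by applying Theorem \ref{isolated point in Max} pointwise. Each of the conditions (a), (b), (c) is a universal quantification over $\Ma(R)$ of a property of a single maximal ideal, so the strategy is simply to identify which part of Theorem \ref{isolated point in Max} yields the pointwise version, and then quantify over all $M \in \Ma(R)$.

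For (a) $\Leftrightarrow$ (b), I would use the equivalence of parts \ref{Isolated} and \ref{M in B(R)} of Theorem \ref{isolated point in Max}, namely that $M \in \mathcal{B}(R)$ iff $M$ is an isolated point of $\Ma(R)$. Since $\mathcal{B}(R) \subseteq \Ma(R)$ by definition, the equality $\Ma(R) = \mathcal{B}(R)$ is the statement that every $M \in \Ma(R)$ lies in $\mathcal{B}(R)$, which by the pointwise equivalence is the statement that every point of $\Ma(R)$ is isolated, i.e., that $\Ma(R)$ is discrete. For (b) $\Leftrightarrow$ (c), I would invoke the equivalence of parts \ref{Isolated} and \ref{Omit} of the same theorem: $M$ is isolated iff $M$ cannot be omitted from $\bigcap \Ma(R)$. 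Recalling the definition from Section 2 that a family of prime ideals is a fixed-place family precisely when no element can be dropped from its intersection, quantifying the pointwise equivalence over $M \in \Ma(R)$ gives that $\Ma(R)$ is discrete iff $\Ma(R)$ is a fixed-place family.

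There is no genuine obstacle here; the corollary is an immediate consolidation of Theorem \ref{isolated point in Max}, and the cited results \cite[Corollaries 4.2, 4.3]{Aliabad2013Fixedplace} and \cite[Theorem 4.6]{Aliabad2013Fixedplace} presumably provide the parallel general statements about fixed-place families that are being specialized to the family $\Ma(R)$. Accordingly, I would present the proof as two short implications chained through the pointwise theorem, with a brief appeal to the definition of fixed-place families, and leave it at that.
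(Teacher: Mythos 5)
Your reduction of (b) $\Leftrightarrow$ (c) to parts \ref{Isolated} and \ref{Omit} of Theorem \ref{isolated point in Max}, together with the paper's definition of a fixed-place family, is fine, and the direction (a) $\Rightarrow$ (b) via parts \ref{Isolated} and \ref{M in B(R)} is also fine. The gap is in (b) $\Rightarrow$ (a): you assert that $\mathcal{B}(R) \subseteq \Ma(R)$ ``by definition,'' but that is not what the paper defines. A Bourbaki associated prime divisor is by definition a \emph{minimal} prime $\PI_0 \in \Mi(R)$ with $\sqrt{\{0\}} \neq \bigcap_{\PI_0 \neq \PI \in \Mi(R)} \PI$, so $\mathcal{B}(R) \subseteq \Mi(R)$, and there is no a priori inclusion of $\mathcal{B}(R)$ in $\Ma(R)$. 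Consequently, quantifying the pointwise equivalence only yields $\Ma(R) \subseteq \mathcal{B}(R)$ from discreteness; the reverse inclusion, which statement (a) also demands, is left unproved in your argument.

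The gap is easy to close, but it needs an explicit step (or an appeal to the cited results of the fixed-place paper, which is what the authors lean on). For instance: if $\Ma(R)$ is discrete, then $\Ma(R) \subseteq \mathcal{B}(R) \subseteq \Mi(R)$, so every maximal ideal is a minimal prime; then for any $\PI \in \Sp(R)$, choosing a maximal ideal $\MI \supseteq \PI$ and using minimality of $\MI$ forces $\PI = \MI$, so $\Sp(R) = \Ma(R) = \Mi(R)$, and hence $\mathcal{B}(R) \subseteq \Mi(R) = \Ma(R)$, giving the equality in (a). With that (or an equivalent citation) inserted, your consolidation of Theorem \ref{isolated point in Max} does prove the corollary and is in the same spirit as the paper, which offers no proof beyond citing that theorem and \cite[Corollaries 4.2, 4.3 and Theorem 4.6]{Aliabad2013Fixedplace}.
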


\begin{corollary}\label{connecton between isolated points}
	If $ \Ma(R) $ has an isolated point, then $ \Mi(R) $ has an isolated point.
\end{corollary}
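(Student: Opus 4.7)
The plan is to route the hypothesis about $\Ma(R)$ through the family $\mathcal{B}(R)$ of Bourbaki associated prime divisors, using Theorem~\ref{isolated point in Max}, and then transfer the conclusion to $\Mi(R)$ by invoking the cited external results.

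First, let $M$ be an isolated point of $\Ma(R)$. Applying the equivalence (a) $\Leftrightarrow$ (d) in Theorem~\ref{isolated point in Max}, we obtain $M \in \mathcal{B}(R)$. By the very definition of $\mathcal{B}(R)$ (a family of \emph{minimal} prime ideals), this already gives $M \in \Mi(R)$, together with the witnessing property $\bigcap_{M \neq P \in \Mi(R)} P \neq \sqrt{\{0\}}$. Concretely, by the same theorem we can pick $x \in R$ with $\An(x) = M$, and this $x$ will serve as the element separating $M$ from the remaining minimal primes.

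Second, to conclude that $M$ is isolated in $\Mi(R)$, I would invoke \cite[Corollaries 4.2 and 4.3]{Aliabad2013Fixedplace} (combined with \cite[Theorem 4.6]{Aliabad2013Fixedplace}), which characterize the isolated points of $\Mi(R)$ in the hull-kernel topology as exactly the members of $\mathcal{B}(R)$. Since $M \in \mathcal{B}(R)$ by the previous paragraph, this immediately yields that $M$ is an isolated point of $\Mi(R)$, so $\Mi(R)$ has an isolated point.

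There is no substantive obstacle: the only thing to verify is that the external characterization of isolated points of $\Mi(R)$ via $\mathcal{B}(R)$ applies in our setting, but $\mathcal{B}(R)$ is defined purely in terms of $\Mi(R)$, and the semiprimitivity assumption standing in this section does not interfere. The proof is therefore essentially a chain of three implications: isolated in $\Ma(R)$ $\Rightarrow$ $M \in \mathcal{B}(R)$ $\Rightarrow$ $M \in \Mi(R)$ is isolated.
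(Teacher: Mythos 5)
Your proposal is correct and follows essentially the same route as the paper, which presents this corollary as an immediate consequence of Theorem \ref{isolated point in Max} together with \cite[Corollaries 4.2 and 4.3, Theorem 4.6]{Aliabad2013Fixedplace}: an isolated point $\MI$ of $\Ma(R)$ lies in $\mathcal{B}(R)\sub\Mi(R)$, and the cited results identify members of $\mathcal{B}(R)$ with isolated points of $\Mi(R)$. No gap to report.
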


\begin{corollary}
	If the zero ideal of a ring $ R $ is an anti fixed-place ideal, then $ \Ma(R) $ does not have any isolated point.
\end{corollary}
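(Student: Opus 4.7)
The plan is to derive this corollary as a direct consequence of Theorem \ref{isolated point in Max}, via the contrapositive. The key observation is that the notion of ``anti fixed-place ideal'' (as developed in \cite{Aliabad2013Fixedplace}) is set up so that the zero ideal being anti fixed-place means precisely that the family $\Mi(R)$ admits no Bourbaki associated prime divisor; equivalently, $\mathcal{B}(R)=\emptyset$. Once this translation of the hypothesis is in hand, the corollary is essentially immediate.

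More concretely, I would argue by contradiction. Suppose $\MI$ is an isolated point of $\Ma(R)$. Then by the equivalence \ref{Isolated}$\Leftrightarrow$\ref{M in B(R)} of Theorem \ref{isolated point in Max}, we have $\MI\in\mathcal{B}(R)$. In particular, $\mathcal{B}(R)\neq\emptyset$, which contradicts the assumed anti fixed-place character of the zero ideal. Hence $\Ma(R)$ can have no isolated point.

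Alternatively, one may route the proof through Corollary \ref{connecton between isolated points}: if $\Ma(R)$ had an isolated point then so would $\Mi(R)$, and an isolated point in $\Mi(R)$ yields (by the very definition of Bourbaki associated prime divisor) a prime $\PI_0\in\Mi(R)$ with $\sqrt{\{0\}}\neq\bigcap_{\PI_0\neq\PI\in\Mi(R)}\PI$; this again forces $\mathcal{B}(R)\neq\emptyset$, contradicting the hypothesis.

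There is really no serious obstacle here: the corollary is a packaging of Theorem \ref{isolated point in Max} together with the definitional fact that the zero ideal is anti fixed-place exactly when $\mathcal{B}(R)=\emptyset$. The only thing to be careful about is citing the correct characterization from \cite{Aliabad2013Fixedplace} so that the translation of ``anti fixed-place'' into the vanishing of $\mathcal{B}(R)$ is transparent to the reader; everything else is forced by what has already been proved.
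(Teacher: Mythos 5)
Your argument is correct and is essentially the paper's own route: the paper presents this corollary as an immediate consequence of Theorem \ref{isolated point in Max} (the equivalence of being an isolated point with membership in $\mathcal{B}(R)$) combined with the characterization of anti fixed-place ideals from \cite[Theorem 4.6]{Aliabad2013Fixedplace}, which is precisely your translation of the hypothesis into $\mathcal{B}(R)=\emptyset$. Nothing further is needed beyond citing that characterization, as you note.
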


Given $\Ma(R)$ is $T_1$, if $\Ma(R)$ is finite, then $ \Ma(R) $ is $ T_1 $. Henceforth, we assume that $ \Ma(R) $ is infinite.

\begin{theorem}
	Suppose that $ \{ R_\la \}_{ \la \in\LA } $ is a family of rings. Then $ \Ma(R_\la) $ is homeomorphic to the closed subset $ \mathcal{M}_\la=\{ \pi_\la^{-1} (\MI_\la) : \MI_\la \in \Ma(R_\la) \} $ of $ \Ma \left( \prod_{\la \in\LA} R_\la \right) $.
\end{theorem}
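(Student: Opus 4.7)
The plan is to build the obvious map $\phi_\la : \Ma(R_\la) \to \Ma(R)$, $M_\la \mapsto \pi_\la^{-1}(M_\la)$, show its image is exactly $\mathcal{M}_\la$, and then verify that $\phi_\la$ is a bijection, continuous, and closed onto $\mathcal{M}_\la$, and finally that $\mathcal{M}_\la$ is a closed subset of $\Ma(R)$.

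First I would verify that $\phi_\la$ is well-defined and $\mathcal{M}_\la$ is its image by definition. Since the projection $\pi_\la : R \to R_\la$ is a surjective ring homomorphism, the isomorphism $R/\pi_\la^{-1}(M_\la) \cong R_\la/M_\la$ shows $\pi_\la^{-1}(M_\la)$ is maximal whenever $M_\la$ is. Injectivity of $\phi_\la$ follows from $\pi_\la(\pi_\la^{-1}(M_\la)) = M_\la$ (again by surjectivity), so $\phi_\la : \Ma(R_\la) \to \mathcal{M}_\la$ is a bijection.

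Next I would show $\mathcal{M}_\la$ is closed in $\Ma(R)$ by identifying it as $h_M(\ker \pi_\la)$: a maximal ideal $M$ of $R$ contains $\ker \pi_\la$ iff $M = \pi_\la^{-1}(\pi_\la(M))$, and by the correspondence theorem applied to the surjection $\pi_\la$ this happens precisely when $M \in \mathcal{M}_\la$. For continuity of $\phi_\la$, a basic closed set in $\mathcal{M}_\la$ has the form $h_M(S) \cap \mathcal{M}_\la$ for some $S \sub R$; tracing through the definitions, $\pi_\la^{-1}(M_\la) \in h_M(S)$ iff $\pi_\la(S) \sub M_\la$, so $\phi_\la^{-1}(h_M(S) \cap \mathcal{M}_\la) = h_M(\pi_\la(S))$, which is closed in $\Ma(R_\la)$.

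The one step that needs a small trick is showing $\phi_\la$ is closed, since the spaces involved are only known to be $T_1$ and we cannot appeal to the ``continuous bijection from compact to Hausdorff'' folklore. Given a closed set $h_M(S_\la) \sub \Ma(R_\la)$, I would define $T \sub R$ to be the set of elements whose $\la$-coordinate lies in $S_\la$ and whose other coordinates vanish; equivalently, $T$ is the image of $S_\la$ under the inclusion $R_\la \hookrightarrow R$ induced by the idempotent $e_{\{\la\}^c}$. For $M = \pi_\la^{-1}(M_\la) \in \mathcal{M}_\la$, one checks directly that $T \sub M$ iff $S_\la \sub M_\la$, so $\phi_\la(h_M(S_\la)) = h_M(T) \cap \mathcal{M}_\la$, which is closed in $\mathcal{M}_\la$. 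Combining these four items gives the required homeomorphism; this last verification is the place where care is needed, but it reduces to an elementary manipulation once the right $T$ is written down.
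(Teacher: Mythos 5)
Your proposal is correct and takes essentially the same route as the paper: both identify $\mathcal{M}_\la$ with $h_M(\ker\pi_\la)$ (the paper via the idempotent decomposition $x = x\e{\{\la\}}+x\e{\{\la\}^c}$, you via the correspondence theorem for the surjection $\pi_\la$) and then transport the Zariski topology along $M_\la \mapsto \pi_\la^{-1}(M_\la)$. The only difference is that the paper dismisses the homeomorphism verification as ``readily seen,'' whereas you correctly spell out continuity and closedness of the map, including the closed-map computation with $T=S_\la\e{\{\la\}^c}$.
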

\begin{proof}
	First we show that for each ideal $ \AI $ of $ \prod_{\la \in\LA} R_\la $, if for some $ \la \in\LA $ we have $ \pi_\la^{-1} (0) \subseteq \AI $, then $ \AI=\pi_\la^{-1} \left( \pi_\la( \AI ) \right) $. Clearly, $ \AI \subseteq \pi_\la^{-1} \left( \pi_\la (\AI) \right) $. Now suppose that $ x \in \pi_\la^{-1} \left( \pi_\la (\AI) \right) $, then $ \pi_\la(x) \in \pi_\la(\AI) $, so $ y \in \AI $ exists such that $ \pi_\la(x)=\pi_\la(y) $. Hence $ x \e{\{ \la \}} =y \e{\{ \la \}} \in \AI $. Since $ \e{\{ \la \}^c } \in \pi_\la^{-1}(0) \subseteq \AI$, it follows that $ x=x \e{\{\la\}}+x \e{\{ \la \}^c}=y \e{\{\la\}}+x \e{\{ \la \}^c} \in \AI $. This shows that $ x \in \AI $ and thus $ \AI= \pi_\la^{-1} \left( \pi_\la( \AI ) \right) $. Consequently, $ h_M( \pi_\la^{-1} \{ 0 \})=\M_\la $ and it is readily seen that $ \M_\la \cong \Ma ( R_\la ) $. 
\end{proof}

\begin{corollary}\label{maximal ideal of product}
	Suppose that $ \{ R_i \}_{1 \leqslant i \leqslant n} $ is a finite family of rings and $ \mathcal{M}_i=\{ \pi_i^{-1} (\MI_i) : \MI_i \in \Ma(R_i) \} $, for each $ 1 \leqslant i \leqslant n $. Then 
	\[ \Ma\left( \prod_{i=1}^n R_i \right) \cong \bigoplus_{i=1}^n \Ma(R_i) . \]
\end{corollary}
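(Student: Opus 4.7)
The plan is to exhibit $\Ma(R)$, where $R=\prod_{i=1}^n R_i$, as a topological disjoint union of the clopen subsets $\mathcal{M}_i$, each of which the preceding theorem already identifies homeomorphically with $\Ma(R_i)$.

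First I would collect the arithmetic of the standard idempotents $\e{\{i\}^c}$, which have $1$ in coordinate $i$ and $0$ elsewhere. By Lemma \ref{Z and e elementary propoeties}\ref{Z(multiple)}, $\e{\{i\}^c}\e{\{j\}^c}=\e{\{i\}^c\cup\{j\}^c}=0$ for $i\neq j$ (since $\{i\}^c\cup\{j\}^c$ is the whole index set), and a direct coordinate check, or an iterated application of Lemma \ref{Z and e elementary propoeties}\ref{e cap}, gives $\sum_{i=1}^n\e{\{i\}^c}=1$.

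The main step is to show each $\MI\in\Ma(R)$ belongs to exactly one $\mathcal{M}_i$. Since $\MI$ is prime and $\e{\{i\}^c}\e{\{j\}^c}=0\in\MI$ for $i\neq j$, at most one of the idempotents $\e{\{i\}^c}$ can escape $\MI$; since their sum is $1\notin\MI$, exactly one, say $\e{\{i_0\}^c}$, is not in $\MI$. Then $\e{\{j\}^c}\in\MI$ for every $j\neq i_0$, so the kernel $\pi_{i_0}^{-1}(0)$ (which is generated by the $\e{\{j\}^c}$ for $j\neq i_0$) lies in $\MI$. The argument of the preceding theorem then yields $\MI=\pi_{i_0}^{-1}(\pi_{i_0}(\MI))$, and $\pi_{i_0}(\MI)$ is maximal in $R_{i_0}$ because $R/\MI\cong R_{i_0}/\pi_{i_0}(\MI)$ is a field. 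Hence $\MI\in\mathcal{M}_{i_0}$, and the choice of $i_0$ is forced by the unique idempotent not lying in $\MI$, so $\MI$ belongs to no other $\mathcal{M}_i$.

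To promote this set-theoretic partition into a topological disjoint union, I would note that each $\mathcal{M}_i=h_M(\pi_i^{-1}(0))$ is closed by the preceding theorem, so its complement $\bigcup_{j\neq i}\mathcal{M}_j$ is a finite union of closed sets, hence closed; consequently every $\mathcal{M}_i$ is clopen in $\Ma(R)$. Combining this with the homeomorphisms $\mathcal{M}_i\cong\Ma(R_i)$ supplied by the preceding theorem assembles into the desired homeomorphism $\Ma(R)\cong\bigoplus_{i=1}^n\Ma(R_i)$. The only real obstacle is the idempotent counting argument that pins each maximal ideal to a unique factor; once finiteness of $n$ provides both the partition-of-unity identity and the fact that finite unions of closed sets are closed, the rest is formal.
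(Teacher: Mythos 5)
Your proof is correct and follows essentially the same route as the paper: partition $\Ma\left(\prod_{i=1}^n R_i\right)$ into the pairwise disjoint closed sets $\mathcal{M}_i$, identify each $\mathcal{M}_i$ with $\Ma(R_i)$ via the preceding theorem, and use finiteness of $n$ to conclude the pieces are clopen and the space is their topological disjoint union. The only difference is that the paper declares the covering and disjointness of the $\mathcal{M}_i$ to be clear, whereas you substantiate them with the orthogonal-idempotent counting argument; this fills in omitted detail rather than changing the method.
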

\begin{proof}
	It is clear that $ \Ma\left( \prod_{i=1}^n R_i \right)=\bigcup_{i=1}^n \mathcal{M}_i $, $ i \neq j $ implies $ \mathcal{M}_i \cap \mathcal{M}_j=\emp $ and for every $ 1 \leqslant i , j \leqslant n $, we have $ \Ma(R_i) \cong \mathcal{M}_i $ is a closed subset of $ \prod_{i=1}^n R_i $. Consequently, $ \Ma\left( \prod_{i=1}^n R_i \right) \cong \bigoplus_{i=1}^n \Ma(R_i) $.
\end{proof}

Suppose that $ \{ F_n \}_{n \in \N} $ is a family of fields. Theorem \ref{cardinal of Max in product of locales} implies that $ \left| \Ma\left( \prod_{n \in \N} F_n\right) \right|=2^{2^\N} $. On the other hands, it is readily seen that $ \N \cong \bigoplus_{n \in \N} \Ma(F_n) $, so $ \left|  \bigoplus_{n \in \N} \Ma(F_n) \right|=\omega $. Hence $ \Ma\left( \prod_{n \in \N} F_n \right) \not\cong \bigoplus_{n \in \N} F_n $. Thus the above corollary need not be true for a product of infinite family of rings. 

\begin{proposition}\label{Max is cofinte}
	$ \Ma(R) $ is a cofinite space \ff for every infinite subset $ \CF $ of $ \Ma(R) $ we have $ k( \CF)=\{0\} $.
	\label{the space is cofinite}
\end{proposition}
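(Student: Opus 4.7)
The plan is to exploit the standard identity $\mathrm{Cl}_{\Ma(R)}(\CF)=h_Mk(\CF)$ (which is a special case of Lemma~\ref{limit and interior point}\ref{Cl in Y} with $Y=\Ma(R)$) together with the running assumption that $R$ is semiprimitive, i.e. $\bigcap\Ma(R)=\Jac(R)=\{0\}$. Since $\Ma(R)$ is $T_1$, every finite subset is automatically closed, so ``$\Ma(R)$ is cofinite'' is equivalent to ``every infinite closed subset of $\Ma(R)$ equals $\Ma(R)$''. This recasting is what makes both directions short.

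For the forward direction, assume $\Ma(R)$ carries the cofinite topology and let $\CF\sub\Ma(R)$ be infinite. Then $\CF$ is not closed in a cofinite topology unless $\CF=\Ma(R)$, and in any case $\mathrm{Cl}_{\Ma(R)}(\CF)=\Ma(R)$ because the only closed set containing an infinite subset is the whole space. Hence $h_Mk(\CF)=\Ma(R)$, which means every maximal ideal contains $k(\CF)$, so $k(\CF)\sub\bigcap\Ma(R)=\{0\}$, giving $k(\CF)=\{0\}$.

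For the backward direction, assume every infinite $\CF\sub\Ma(R)$ satisfies $k(\CF)=\{0\}$. Take an arbitrary closed set; by definition it has the form $h_M(S)$ for some $S\sub R$. If $h_M(S)$ is finite we are done, so suppose $h_M(S)$ is infinite. Setting $\CF=h_M(S)$, the hypothesis yields $k(h_M(S))=\{0\}$, and therefore
\[h_M(S)=h_Mkh_M(S)=h_M(\{0\})=\Ma(R).\]
Thus every closed subset of $\Ma(R)$ is either finite or equal to $\Ma(R)$, which together with the $T_1$ property shows that the topology on $\Ma(R)$ is precisely the cofinite topology.

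The only potentially subtle step is the identity $h_M(S)=h_Mkh_M(S)$, but this is just the closure-operator idempotence for the Zariski topology (equivalently the observation that $h_M$ and $k$ form a Galois connection between ideals and subsets of $\Ma(R)$), and it needs no further work. No real obstacle is expected; the argument is essentially a direct translation between the cofinite topology and the kernel/hull operators via semiprimitivity.
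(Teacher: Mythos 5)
Your proof is correct and follows essentially the same route as the paper: both directions rest on the identity $\mathrm{Cl}(\CF)=h_Mk(\CF)$, the semiprimitivity assumption $\bigcap\Ma(R)=\{0\}$, and the $T_1$ fact that finite sets are closed. The only cosmetic difference is that you phrase the backward direction via the form of closed sets $h_M(S)$ and idempotence $h_Mkh_M=h_M$, while the paper computes the closure of an arbitrary infinite subset directly; these are interchangeable.
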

\begin{proof}	
	$ \Rightarrow $). Suppose that $ \CF $ is an infinite subset of $ \Ma(R) $, then $ h_Mk(\CF)=\mathrm{cl} \, \CF=\Ma(R) $, so $ k(\CF)=kh_Mk(\CF)=\bigcap \Ma(R)=\{ 0 \} $ and thus $ k(\CF)=\{0\} $.
	
	$ \Leftarrow $). Since $ \Ma(R) $ is $ T_1 $, so every finite subset of $ \Ma(R) $ is closed, so it is sufficient to show that the closure of each infinite subset of $ \Ma(R) $ is $ \Ma(R) $. Now suppose $ \CF $ is infinite, so $ \bigcap \CF=\{ 0\} $, by the assumption. Hence $ \overline{\CF}=h_Mk(\CF)=h_M(\{0\})=\Ma(R) $.	
\end{proof}

We can conclude immediately the following corollary from the above proposition.

\begin{corollary}
	If $ R $ is a Noetherian U.F.D., then $ \Ma(R) $ is a cofinite space. Thus if $ R $ is a P.I.D., then $ \Ma(R) $ is a cofinite space.
\end{corollary}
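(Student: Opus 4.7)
The plan is to reduce the corollary to the preceding Proposition \ref{Max is cofinte} and then exploit unique factorization. By that proposition, it suffices to verify that $k(\mathcal{F})=\{0\}$ for every infinite subfamily $\mathcal{F}\subseteq\Ma(R)$.

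So I fix such an $\mathcal{F}$ and suppose, towards a contradiction, that some nonzero $x\in\bigcap\mathcal{F}$ exists. Because $R$ is a U.F.D., $x$ admits a finite factorization $x=u\,p_1^{a_1}\cdots p_n^{a_n}$, with $u$ a unit and $p_1,\dots,p_n$ pairwise nonassociate prime elements. For each $M\in\mathcal{F}$, the containment $x\in M$ together with the primeness of $M$ forces $p_i\in M$ for some index $i$, i.e.\ $(p_i)\subseteq M$. Since there are only finitely many $p_i$ but infinitely many $M$'s, the pigeonhole principle yields a fixed prime element $p$ from the factorization such that $(p)\subseteq M$ for infinitely many $M\in\mathcal{F}$.

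The last step is to upgrade ``$M$ contains the principal prime $(p)$'' to ``$M=(p)$'', producing the needed contradiction. For the P.I.D.\ statement this is painless: in a P.I.D.\ every nonzero prime is maximal, so $(p)$ is itself maximal, and $(p)\subseteq M$ with $M\in\Ma(R)$ forces $M=(p)$, contradicting the infinitude of our sub-family. I expect this to be the main obstacle when pushing the argument to the Noetherian U.F.D.\ case: in a Noetherian U.F.D.\ a height-one principal prime need not be maximal (for instance $(x)\subsetneq\BBZ[x]$ sits below $(p,x)$ for every rational prime $p$), so to close the Noetherian U.F.D.\ statement I would need either a dimension-one assumption or a separate argument bounding the number of maximal ideals lying above a fixed principal prime. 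With such an argument in hand, the pigeonhole step above transfers verbatim, and the appeal to Proposition \ref{Max is cofinte} completes the proof.
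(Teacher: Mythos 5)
Your reduction to Proposition \ref{Max is cofinte} plus the factor-and-pigeonhole argument is exactly the ``immediate'' argument the paper has in mind (the paper prints no proof at all), and your treatment of the P.I.D.\ half is complete and correct: in a P.I.D.\ every nonzero prime is maximal, so a nonzero $x$ lies in at most as many maximal ideals as it has prime factors, whence the intersection of any infinite family of maximal ideals is zero.

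Your hesitation about the Noetherian U.F.D.\ half is not a gap in your proof but a flaw in the statement itself, and your own example already settles it: $\BBZ[x]$ is a Noetherian U.F.D., yet the infinitely many maximal ideals $(p,x)$, $p$ a rational prime, all contain the nonzero element $x$; so by Proposition \ref{Max is cofinte} the space $\Ma(\BBZ[x])$ is \emph{not} cofinite (equivalently, $h_M(x)$ is an infinite proper Zariski-closed subset, since for instance $(2,x-1)$ avoids it). Hence no ``separate argument bounding the number of maximal ideals above a fixed principal prime'' can exist in this generality, and the first sentence of the corollary is false as stated. The correct fixes are the ones you anticipate: either add a dimension-one hypothesis (but a one-dimensional Noetherian U.F.D.\ is already a P.I.D., so the clause becomes redundant), or replace ``Noetherian U.F.D.''\ by a hypothesis that genuinely guarantees each nonzero element lies in only finitely many maximal ideals, e.g.\ a one-dimensional Noetherian domain or a Dedekind domain, for which your pigeonhole step (applied to the finitely many primes minimal over $(x)$) goes through verbatim.
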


In the following example we show that the converse of Corollary \ref{connecton between isolated points} need not be true.

\begin{example}
	By Proposition \ref{the space is cofinite}, the space $ \Ma\big(\Z\big) $ is cofinite space. Clearly $ \Mi(\Z)=\big\{ \{ 0 \} \big\}  $ is discrete and $ \Ma(\Z) $ does not have any isolated point.
\end{example}

\begin{theorem}
	$ \Ma(R) $ is disconnected \ff $ R $ is direct sum of two proper ideals of the ring.
\end{theorem}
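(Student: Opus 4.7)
The plan is to translate a topological disconnection of $\Ma(R)$ directly into an internal direct sum decomposition of $R$, using that the Zariski-closed sets of $\Ma(R)$ are exactly the sets $h_M(I)$. The two basic identities I will rely on are $h_M(I)\cap h_M(J)=h_M(I+J)$ and (since maximal ideals are prime) $h_M(I)\cup h_M(J)=h_M(IJ)$; equivalently, by the facts \eqref{h_M subset h_M^c} and \eqref{h_M^c subset h_M} (which already incorporate the standing semiprimitive reduction $\Jac(R)=\{0\}$), $h_M(I)\cap h_M(J)=\emp$ iff $I+J=R$, and $h_M(I)\cup h_M(J)=\Ma(R)$ iff $IJ=\{0\}$.

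For $(\Leftarrow)$, given $R=I\oplus J$ with $I,J$ proper, $I+J=R$ yields $h_M(I)\cap h_M(J)=\emp$, and $IJ\sub I\cap J=\{0\}$ yields $h_M(I)\cup h_M(J)=h_M(\{0\})=\Ma(R)$; since $I,J\neq R$, both pieces are nonempty, so $\Ma(R)=h_M(I)\sqcup h_M(J)$ is a disconnection.

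For $(\Rightarrow)$, a disconnection $\Ma(R)=h_M(I)\sqcup h_M(J)$ with nonempty sides immediately gives $I+J=R$ and $IJ=\{0\}$ from the dictionary above. Writing $1=e+f$ with $e\in I$ and $f\in J$, one has $ef\in IJ=\{0\}$, so $e=e(e+f)=e^2$ is an idempotent and similarly for $f$. For any $x\in I\cap J$, both $xe\in JI$ and $xf\in IJ$ vanish, whence $x=x(e+f)=0$; combined with $I+J=R$ this gives $R=I\oplus J$, and both summands are proper because $h_M(I)$ and $h_M(J)$ are nonempty.

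The only mildly subtle point is the step $IJ\sub\Jac(R)=\{0\}$ used in the $(\Rightarrow)$ direction: this is exactly where the blanket semiprimitive assumption made at the start of Section~4 is indispensable, since without it one would recover only $IJ\sub\Jac(R)$, which in general does not suffice to produce the orthogonal idempotents supporting the direct sum. Granting that reduction, the rest is a direct transcription of the topological partition into an idempotent decomposition of $R$.
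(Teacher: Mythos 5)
Your proof is correct and follows essentially the same route as the paper: both translate a clopen partition of $\Ma(R)$ into a pair of proper ideals with $\AI+\BI=R$ and $\AI\BI=\{0\}$ via the facts \eqref{h_M subset h_M^c} and \eqref{h_M^c subset h_M} (relying on the standing semiprimitivity reduction), and then conclude $\AI\cap\BI=\{0\}$. The only difference is that you spell out the idempotent decomposition $1=e+f$, a step the paper leaves as "easy to see."
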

\begin{proof}
	$ \Ma(R) $ is disconnected \ff it has a nonempty clopen proper subset $ \mathcal{G} $, it is equivalent to say that there are two non-zero proper ideals $ \AI $ and $ \BI $ such that $ h_M(\AI)=h_M^c(\BI) $. Hence, by the facts \eqref{h_M subset h_M^c}  and \eqref{h_M^c subset h_M}, $ \Ma(R) $ is disconnected \ff there are two non-zero proper ideals $ \AI $ and $ \BI $ such that $ \AI+\BI=R $ and $ \AI\BI=\{ 0 \} $. It is easy to see that in this case $ \AI \cap \BI=\{ 0 \}  $ and it completes the proof.  
\end{proof}

Now, we can conclude immediately the following corollary from the above theorem and Proposition \ref{Max is cofinte}

\begin{corollary}
	If the intersection of each infinite family of maximal ideals of a ring is zero, then the ring is not direct sum of its two proper ideals.
\end{corollary}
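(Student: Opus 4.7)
The plan is to read the corollary as a direct translation of its hypothesis through Proposition \ref{Max is cofinte} and the preceding theorem. First I would rephrase the hypothesis: saying that every infinite family of maximal ideals of $R$ has zero intersection is, after passing from families to the subsets of $\Ma(R)$ they form, exactly the condition $k(\CF)=\{0\}$ for every infinite $\CF\sub\Ma(R)$. By Proposition \ref{Max is cofinte} this is equivalent to $\Ma(R)$ carrying the cofinite topology.

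Next I would invoke the standing assumption of this subsection that $\Ma(R)$ is infinite; on an infinite set the cofinite topology is connected, since any two nonempty open sets have finite complements and therefore cannot be disjoint subsets of an infinite space. Hence $\Ma(R)$ is connected.

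Finally, the preceding theorem states that $\Ma(R)$ is disconnected \ff $R$ is the direct sum of two proper ideals. Contrapositively, since $\Ma(R)$ is connected, $R$ cannot be written as a direct sum of two proper ideals, which is the desired conclusion.

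The argument has no genuine obstacle; it is a packaging of two earlier results. The only point that deserves a sentence of care is the step where one passes from a cofinite infinite space to connectedness, but this is a one-line topological observation and is presumably what the authors mean by asserting that the corollary follows \emph{immediately} from the theorem and Proposition \ref{Max is cofinte}.
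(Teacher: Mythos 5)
Your argument is correct and is exactly the route the paper intends: the hypothesis gives the cofinite topology on $\Ma(R)$ via Proposition \ref{Max is cofinte}, connectedness of an infinite cofinite space then applies (using the section's standing assumption that $\Ma(R)$ is infinite), and the contrapositive of the preceding theorem finishes it. Nothing differs in substance from the paper's ``immediate'' deduction.
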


If $ R $ has just $ n $ maximal ideals, then, since $ \Ma(R) $ is $T_1$, it follows that $ \Ma(R) $ is discrete and therefore $ C\big(\Ma(R)\big) \cong \prod_{i=1}^n \R $. Henceforth, we assume that $ \Ma(R) $ is infinite. Suppose $ \Ma(R) $ is infinite and intersection of each infinite family of maximal ideals of $ R $ is zero (for example $ R=\mathbb{Z} $), then  $ \Ma(R) $ is a cofinite topology, so every real valued continuous function on $ \Ma(R) $ is constant and therefore $ C\big(\Ma(\Z)\big) \cong \R $. Hence the converse of the above fact need not be true. 

\begin{proposition}
	Suppose that $ R_1 $, $ R_2 $, $ \ldots $ and $ R_n $ are rings. Then 
	\[C\Big(\Ma\big( \prod_{i=1}^n R_i \big)\Big) \cong \prod_{i=1}^n C\big(\Ma(R_i)\big).\]
\end{proposition}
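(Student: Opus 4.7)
The plan is to reduce the statement to a purely topological fact about disjoint unions, then identify the ring structure. By Corollary \ref{maximal ideal of product}, we have a homeomorphism
\[ \Ma\Big( \prod_{i=1}^n R_i \Big) \cong \bigoplus_{i=1}^n \Ma(R_i), \]
and any homeomorphism of spaces induces a ring isomorphism of the corresponding rings of real-valued continuous functions by composition. So it suffices to prove the general lemma: for a finite family $\{X_i\}_{i=1}^n$ of topological spaces,
\[ C\Big( \bigoplus_{i=1}^n X_i \Big) \cong \prod_{i=1}^n C(X_i) \]
as rings.

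To establish the lemma, I would define $\Phi : C(\bigoplus_{i=1}^n X_i) \to \prod_{i=1}^n C(X_i)$ by $\Phi(f) = (f|_{X_1}, \ldots, f|_{X_n})$, where we identify each $X_i$ with its canonical clopen image in the disjoint union. Because restriction to a subspace preserves pointwise addition, multiplication and the constant $1$, the map $\Phi$ is a ring homomorphism. The inverse is defined by gluing: given $(f_1, \ldots, f_n) \in \prod_{i=1}^n C(X_i)$, set $f : \bigoplus_{i=1}^n X_i \to \R$ to agree with $f_i$ on $X_i$. Since each $X_i$ is clopen in the disjoint union, for any open $U \sub \R$ we have $f^{-1}(U) = \bigcup_{i=1}^n f_i^{-1}(U)$, which is a union of open subsets of the clopen pieces $X_i$ and hence open in $\bigoplus_{i=1}^n X_i$. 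Thus $f$ is continuous, and the assignment $(f_1, \ldots, f_n) \mapsto f$ is a two-sided inverse to $\Phi$.

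There is no real obstacle here; the only thing to watch is to record explicitly that addition, multiplication, and the multiplicative identity in $C(\bigoplus_{i=1}^n X_i)$ are computed pointwise on each piece, so both $\Phi$ and its inverse respect them. Combining the topological lemma with Corollary \ref{maximal ideal of product} yields the ring isomorphism
\[ C\Big(\Ma\big( \prod_{i=1}^n R_i \big)\Big) \cong C\Big( \bigoplus_{i=1}^n \Ma(R_i) \Big) \cong \prod_{i=1}^n C\big( \Ma(R_i)\big), \]
which is the claim.
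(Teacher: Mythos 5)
Your proposal is correct and follows essentially the same route as the paper: the paper likewise combines Corollary \ref{maximal ideal of product} with the standard fact that $C$ of a finite disjoint union is the product of the $C(X_i)$'s, which it simply cites (as \cite[1B.6]{Gillman1960Rings}) rather than proving. Your explicit restriction-and-gluing verification of that cited fact is fine and adds nothing problematic.
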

\begin{proof}
	By Corollary \ref{maximal ideal of product} and \cite[1B.6]{Gillman1960Rings},
	\[ C\Big(\Ma\big( \prod_{i=1}^n R_i \big)\Big) \cong C \left( \bigoplus_{i=1}^n \Ma(R_i) \right) \cong \prod_{i=1}^n C\big(\Ma(R_i)\big). \qedhere \] 
\end{proof}

\begin{theorem}
	If $D$ is an integral domain, then $ C\big(\Ma(D)\big) \cong \R $.
\end{theorem}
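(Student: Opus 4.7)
The plan is to establish that $\Ma(D)$ is an irreducible topological space; once this is known, any $f\in C\bigl(\Ma(D)\bigr)$ must be constant, so $C\bigl(\Ma(D)\bigr)$ is precisely the image of the ring monomorphism $\R\TO C\bigl(\Ma(D)\bigr)$ sending $r$ to the constant function $r$, yielding the desired isomorphism.

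To prove irreducibility I would use the standard identity $h_M(\AI)\cup h_M(\BI)=h_M(\AI\BI)$, which is just the primeness of maximal ideals. Suppose $\Ma(D)=h_M(\AI)\cup h_M(\BI)=h_M(\AI\BI)$. Then $\AI\BI\sub kh_M(\AI\BI)=k\bigl(\Ma(D)\bigr)=\Jac(D)$, and by the standing semiprimitivity convention of this section $\Jac(D)=\{0\}$, so $\AI\BI=\{0\}$. Since $D$ is an integral domain, this forces $\AI=\{0\}$ or $\BI=\{0\}$, whence $h_M(\AI)=\Ma(D)$ or $h_M(\BI)=\Ma(D)$. So $\Ma(D)$ cannot be written as a union of two proper closed subsets.

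Now let $f\in C\bigl(\Ma(D)\bigr)$ and suppose, for contradiction, that $f$ is non-constant. Pick $\MI_1,\MI_2\in\Ma(D)$ with $f(\MI_1)\neq f(\MI_2)$ and disjoint open neighborhoods $U\ni f(\MI_1)$, $V\ni f(\MI_2)$ in $\R$. Then $f^{-1}(U)$ and $f^{-1}(V)$ are non-empty disjoint open subsets of $\Ma(D)$, and their complements are two proper closed subsets whose union is $\Ma(D)$; this contradicts irreducibility. Hence every $f$ is constant, and since $\Ma(D)\neq\emp$ the constants-embedding $\R\TO C\bigl(\Ma(D)\bigr)$ is a ring isomorphism.

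The only genuinely non-formal step is the implication $\AI\BI=\{0\}\RTA\AI=\{0\}\text{ or }\BI=\{0\}$, which is exactly where the integrality hypothesis on $D$ is used; the remainder is routine Zariski bookkeeping, so no serious obstacle is anticipated.
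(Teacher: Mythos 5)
Your proof is correct, and it takes a recognizably different route from the paper's. The paper argues by contradiction: given a non-constant $f$, it builds two disjoint closed sets $E$ and $F$ with $\MI\in E^\circ$, $\NI\in F^\circ$, writes the interiors via basic open sets $h_M^c(x)\subseteq h_M(\AI)=E$, and then uses the fact that $h_M^c(\AI)\subseteq h_M(\BI)$ \ff $\AI\BI=\{0\}$ together with the domain hypothesis to force $\AI=\BI=\{0\}$, so that $E=F=\Ma(D)$, contradicting disjointness. You instead prove outright that $\Ma(D)$ is irreducible, via $h_M(\AI)\cup h_M(\BI)=h_M(\AI\BI)$ and $\AI\BI\subseteq k(\Ma(D))=\Jac(D)=\{0\}$, and then invoke the general fact that a real-valued continuous function on an irreducible space is constant; this is a cleaner and more standard packaging of the same underlying algebraic point (a product of ideals vanishing in a domain forces one factor to vanish). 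Note that both arguments lean on the section's standing semiprimitivity convention: you use it explicitly through $\Jac(D)=\{0\}$, while the paper uses it implicitly, since without $\Jac(R)=\{0\}$ the forward direction of the displayed fact only yields $\AI\BI\subseteq\Jac(R)$. Neither proof literally covers a domain with nonzero Jacobson radical (and the reduction $R\mapsto R/\Jac(R)$ does not preserve being a domain), but that caveat applies equally to the paper's own proof, so it is not a gap attributable to your approach; your appeal to the convention is at exactly the paper's level of rigor. The remaining steps of your argument (every closed set has the form $h_M(\AI)$, preimages of disjoint neighborhoods of distinct values give disjoint nonempty open sets, and the constants embed $\R$ isomorphically onto $C(\Ma(D))$ since $\Ma(D)\neq\emp$) are all sound.
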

\begin{proof}
	On contrary, suppose that $ C\big(\Ma(D)\big) \not \cong \R $, so there are distinct elements $\MI$ and $\NI$ in $\Ma(D)$ such that $f(\MI)=r<s=f(\NI)$. Set $E=f^{-1}\big[r-1,\frac{r+s}{3}\big]$ and $F=f^{-1}\big[\frac{r+s}{2},s+1\big]$. Then $ \MI \in E^\circ$, $ \NI \in F^\circ$ and $E$ and $F$ are disjoint closed sets in $\Ma(D)$, thus there are elements $x,y \in D$ and ideals $\AI$ and $\BI$ of $D$ such that $ \MI \in h_M^c(x) \subseteq h_M(\AI)=E $  and $ \NI \in h_M^c(y) \subseteq h(\BI)=F $. Fact  \eqref{h_M^c subset h_M} concludes that $ x \AI=\{ 0 \}  $, hence $ 0 \neq x \in \An(\AI) $, so $ \An(\AI) \neq \{ 0 \} $, since $D$ is an integral domain, it follows that $ \AI=\{ 0 \} $. Similarly, we can show that $ \BI=\{ 0 \} $. Consequently, $E \cap F= h_M(\AI) \cap h_M(\BI)=\Ma(D) \neq \emp $, which is a contradiction.
\end{proof}

In the following important theorem we show that in study of rings of real-valued continuous functions on the space of maximal ideals of rings, we can focus just on rings of real-valued continuous functions on a compact $T_4$ spaces.  

\begin{theorem}
	For each ring $ R $, there is a compact $ T_4 $ space $ Y $ such that 
	\[ C\big(\Ma(R)\big) \cong C\big(\Ma(C(Y))\big). \]
\end{theorem}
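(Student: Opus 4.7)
The plan is to replace $\Ma(R)$ by its ``functionally Hausdorff'' quotient. As noted at the opening of Section 4, $\Ma(R)$ is always compact (and $T_1$). Define an equivalence relation $\sim$ on $\Ma(R)$ by declaring $\MI\sim\NI$ iff $f(\MI)=f(\NI)$ for every $f\in C(\Ma(R))$, and let $Y:=\Ma(R)/{\sim}$ with the quotient topology, with canonical surjection $q:\Ma(R)\to Y$.

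First I would verify that $Y$ is compact $T_4$: it is a continuous image of the compact space $\Ma(R)$, hence compact; and if $[\MI]\neq[\NI]$ in $Y$ then by construction some $f\in C(\Ma(R))$ separates $\MI$ from $\NI$, and because $f$ is constant on $\sim$-classes it descends to a continuous function $\widetilde f$ on $Y$ separating $[\MI]$ from $[\NI]$, so $Y$ is Hausdorff. A compact Hausdorff space is $T_4$.

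Next I would show that the pullback $q^{*}\colon C(Y)\to C(\Ma(R))$, $g\mapsto g\circ q$, is a ring isomorphism. It is a ring homomorphism; injectivity follows from surjectivity of $q$; and surjectivity follows because every $f\in C(\Ma(R))$ is constant on each $\sim$-class (by definition of $\sim$), hence factors uniquely as $f=\widetilde f\circ q$, and $\widetilde f$ is continuous because $q$ is a quotient map.

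Finally, since $Y$ is compact $T_4$, the Gelfand-Kolmogorov correspondence recalled in the Introduction (namely that $\Ma(C(Y))$ is the Stone-\v Cech compactification of $Y$, which equals $Y$ itself whenever $Y$ is already compact $T_4$) gives a homeomorphism $Y\cong\Ma(C(Y))$, and therefore
\[ C(\Ma(R))\;\cong\;C(Y)\;\cong\;C\bigl(\Ma(C(Y))\bigr), \]
as desired. The main obstacle is the verification that the quotient $Y$ is genuinely a Hausdorff (hence, together with compactness, $T_4$) space, which is the point where one must exploit that $\sim$ is defined by \emph{all} of $C(\Ma(R))$ at once; once Hausdorffness of $Y$ is established the remainder is essentially the classical Gelfand-Kolmogorov identification for compact Hausdorff spaces.
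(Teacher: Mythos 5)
Your proposal is correct and follows essentially the same route as the paper: pass to a space $Y$ on which $C\big(\Ma(R)\big)$ is realized as $C(Y)$, observe $Y$ is compact $T_4$, and then use the Gelfand--Kolmogorov identification $\Ma(C(Y))\cong\beta Y=Y$. The only difference is that where the paper cites the Gillman--Jerison result (Theorem 3.9) producing a Tychonoff image with the same ring of functions and remarks that it extends to arbitrary, possibly non-Hausdorff, spaces, you construct the functionally-indistinguishable quotient of the compact space $\Ma(R)$ explicitly and verify Hausdorffness by hand, which self-containedly replaces that citation.
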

\begin{proof}
	In \cite[Theorem 3.9]{Gillman1960Rings} it is shown that for every Hausdorff space $ X $, there is some Tychonoff space $ Y $ and an onto continuous function $ \tau : X \rightarrow Y $ such that $ C(X) \cong C(Y) $. Analogously, we can show that this fact is also true, for each space $ X $. Hence there is a Tychonoff space $ Y $ and an onto continuous function $ \tau : \Ma(R) \rightarrow Y $ such that $ C\left( \Ma(R) \right) \cong C(Y) $. 
	Since $ \Ma(R) $ is compact and $ \tau $ is onto, it follows that $ Y $ is also compact and therefore $ Y $ is a compact $ T_4 $ space. Since $ \Ma(R) $ is a compact space, $ C(Y)=C^*(Y) \cong C(\beta Y) $. Since $ \beta Y \cong \Ma\big(C(Y)\big) $, it follows that $ C\big(\Ma(R)\big) \cong C\big(\Ma(C(Y))\big)  $.
\end{proof}

In the literature of functions rings, an ideal terms $z$-ideal if $Z(f)=Z(g)$ and $f$ imply that $g \in I$. This concept was extended in \cite{Mason1973z-ideals}. More generally, in the context of commutative rings, an ideal $I$ is $z$-ideal if $h_M(x)=h_M(y)$ and $x \in I$ imply that $y \in I$. This extension has led researchers to demonstrate that $h_M(x)$'s can serve some functions of zero-sets, for examples see  \cite{Aliabad2020Extension,Aliabad2021Commutative}.

We know that 
\begin{mylist}
	\item $Z(f)=X$ \ff $ f=0 $;
	\item $Z(f)=\emp $ \ff $f$ is unit;
	\item $Z(f)^\circ=\emp$ \ff $\An(f)=\{0\} $;
	\item If $Z(f) \subseteq Z(g)^\circ$ then $g$ is a multiple of $f$. 
\end{mylist}
In the following lemma we show that $h_M(x)$'s can serve these functions.

\begin{lemma} \label{Elementary h_M}
	Suppose that $ x , y \in R $. Then 
	\begin{mylist}
		\item $ h_M(x)=\Ma(R) $ \ff $ x=0 $. \label{h_M=Max}
		\item $ h_M(x)=\emp $ \ff $ x $ is unit. \label{h_M=empty}
		\item $ h_M(x)^\circ=\emp $ \ff $ \An(x)=\{ 0 \} $. \label{h_M^o=empty}
		\item $ h_M(x) \subseteq h_M(y)^\circ $ implies that $ y $ is a multiple of $ x $. \label{multiple}   
	\end{mylist}
\end{lemma}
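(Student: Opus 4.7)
Parts \ref{h_M=Max} and \ref{h_M=empty} are direct. For \ref{h_M=Max}, I would invoke the standing semiprimitivity assumption of this section: $h_M(x) = \Ma(R)$ says $x \in \bcap \Ma(R) = \Jac(R) = \{0\}$, and the converse is immediate since $0$ belongs to every ideal. For \ref{h_M=empty}, $h_M(x) = \emp$ says the principal ideal $(x)$ is contained in no maximal ideal, so Zorn's lemma forces $(x) = R$ and hence $x$ is a unit; conversely, a unit lies in no proper ideal.

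For \ref{h_M^o=empty}, my plan is to use that the family $\{h_M^c(a) : a \in R\}$ is an open base for $\Ma(R)$ (since $h_M^c(a) \cap h_M^c(b) = h_M^c(ab)$, using that maximal ideals are prime), which gives
\[ h_M(x)^\circ = \bcup \big\{h_M^c(a) : h_M^c(a) \sub h_M(x)\big\}. \]
By the equivalence \eqref{h_M^c subset h_M}, $h_M^c(a) \sub h_M(x)$ is equivalent to $ax = 0$, i.e.\ $a \in \An(x)$. Hence $h_M(x)^\circ = \emp$ iff $h_M^c(a) = \emp$ for every $a \in \An(x)$, which by \ref{h_M=Max} means $a = 0$ for all such $a$, i.e.\ $\An(x) = \{0\}$.

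The substantive part is \ref{multiple}. Assuming $h_M(x) \sub h_M(y)^\circ$, the same base argument as above would yield $h_M(y)^\circ = \bcup_{a \in \An(y)} h_M^c(a)$. Since $\Ma(R)$ is compact and $h_M(x)$ is closed, $h_M(x)$ is compact, so I would extract a finite subcover: there exist $a_1, \ldots, a_n \in \An(y)$ with $h_M(x) \sub \bcup_{i=1}^n h_M^c(a_i) = h_M^c(\AI)$ for $\AI = (a_1, \ldots, a_n) \sub \An(y)$. The disjointness $h_M(x) \cap h_M(\AI) = \emp$ then forces the ideal $(x) + \AI$ to lie in no maximal ideal, hence $(x) + \AI = R$. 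Writing $1 = rx + b$ with $r \in R$ and $b \in \AI$, and multiplying by $y$, one obtains $y = rxy + by = (ry)x$ since $\AI y = 0$, exhibiting $y$ as a multiple of $x$. The main obstacle here is the compactness-plus-finite-subcover step; once the decomposition of $h_M(y)^\circ$ is in place, the rewriting $1 = rx + b$ and the cancellation $by = 0$ are routine.
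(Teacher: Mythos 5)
Your proposal is correct, and at its core it rests on the same identity the paper uses, namely $h_M(x)^\circ=h_M^c\left(\An(x)\right)$; the difference is that the paper simply cites this as \cite[Lemma 3.7]{Aliabad2020Extension} and then finishes parts \ref{h_M^o=empty} and \ref{multiple} with the facts \eqref{h_M subset h_M^c} and \eqref{h_M^c subset h_M}, whereas you re-derive it from the base description of the open sets together with \eqref{h_M^c subset h_M} (legitimately, since the standing semiprimitivity assumption of this section is exactly what makes that equivalence valid). The one place where you diverge more substantially is part \ref{multiple}: the compactness-plus-finite-subcover step you flag as the main obstacle is in fact unnecessary. Since $\bcup_{a\in\An(y)}h_M^c(a)=h_M^c\left(\An(y)\right)$, your hypothesis already reads $h_M(x)\sub h_M^c\left(\An(y)\right)$, and fact \eqref{h_M subset h_M^c} applied to the pair $\left(\Ge{x},\An(y)\right)$ gives $\Ge{x}+\An(y)=R$ directly; writing $1=rx+z$ with $z\in\An(y)$ and multiplying by $y$ yields $y=rxy$, which is exactly the paper's argument. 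Your compact-subcover version is not wrong (it is valid since $\Ma(R)$ is compact and $h_M(x)$ is closed), but it buys nothing: it only replaces the possibly infinitely generated ideal $\An(y)$ by a finitely generated subideal before invoking the same comaximality conclusion, and the element $z$ you need is produced in either case by the relation $1=rx+b$. So the cleaner route is to drop the subcover and quote \eqref{h_M subset h_M^c} with $\An(y)$ itself.
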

\begin{proof}
	\ref{h_M=Max} and \ref{h_M=empty}. They are evident.
	
	\ref{h_M^o=empty}. By \cite[Lemma 3.7]{Aliabad2020Extension}, 
	\begin{align*}
		h_M(x)^\circ=\emp & \quad \Leftrightarrow \quad h_M^c(\An(x))=\emp \\
		& \quad \Leftrightarrow \quad h_M(\An(x))=\Ma(R) \\
		& \quad \Leftrightarrow \quad \An(x)=\{ 0 \}
	\end{align*}
	\ref{multiple}. By \cite[Lemma 3.7]{Aliabad2020Extension}, $ h_M(x) \subseteq h_M^c(\An(y)) $ and thus $ \langle x  \rangle+\An(y)=R $, by the fact (\ref{h_M subset h_M^c}). Hence $ z \in \An(y) $ and $ r \in R $ exist such that $ rx+z=1 $ and therefore 
	\[ rxy=rxy+0=rxy+zy=( rx+z ) y=y \] 
\end{proof}

Now in the following theorem we improve \cite[Exercises \S 10.19 and \S 4.15]{Lam1991First} and extend \cite[Theorem ]{Gillman1960Rings} and \cite[Theorem 5.8]{Aliabad2021Commutative} for primitive rings.

\begin{theorem}
	The following statements are equivalent.   
	\begin{mylist}
		\item $h_M(x)$ is open, for each $ x \in R $. \label{h_M is open}
		\item Every ideal of $R$ is a $z$-ideal. \label{Every is z-ideal}
		\item Every ideal of $R$ is a strong $z$-ideal. \label{Every is srong}
		\item Every ideal of $R$ is Hilbert. \label{Every is Hilbert}
		\item Every finitely generated ideal of $R$ is a $z$-ideal. \label{Every finitely is z-ideal}
		\item Every finitely generated ideal of $R$ is a strong $z$-ideal. \label{Every finitely is strong}
		\item Every principal ideal of $R$ is a $z$-ideal. \label{Every principal is z-ideal}
		\item Every principal ideal of $R$ is a strong $z$-ideal. \label{Every principal is strong}
		\item Every essential ideal of $R$ is a $z$-ideal. \label{Every essential is z-ideal}
		\item Every essential ideal of $R$ is a strong $z$-ideal. \label{Every essential is strong}
		\item Every ideal of $R$  is semiprime. \label{Every is semiprime}
		\item $R$ is regular. \label{R is regular}
		\item Every prime ideal of $R$ is maximal. \label{Every prime is maximal}
	\end{mylist}
\end{theorem}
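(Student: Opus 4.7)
The plan is to take (l) as the hub of the argument: I will show (l) implies all other conditions, then close the diagram by back-implications to (l) from the ``weakest'' statements (a), (g), (i), (k), and (m). Most downward specializations among the $z$-ideal conditions are immediate from the definitions: if $I$ is Hilbert then $I = kh_M(I) \supseteq kh_M(F)$ for every finite $F \subseteq I$, so $I$ is a strong $z$-ideal and hence a $z$-ideal, and every principal or essential ideal is in particular an ideal. Thus the chains $(d) \Rightarrow (c) \Rightarrow (f) \Rightarrow (h) \Rightarrow (g)$, $(c) \Rightarrow (j) \Rightarrow (i)$, and $(c) \Rightarrow (b) \Rightarrow (e) \Rightarrow (g)$ can each be dispatched in one line.

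For the forward implications from (l): given $x \in R$, write $x = x^2 y$ and set $e = xy$; then $e^2 = e$ and $(x) = (e)$, so $h_M(x) = h_M(e) = h_M^c(1-e)$ is open, giving (a). Since $\mathrm{nil}(R) \subseteq \Jac(R) = 0$, $R$ is reduced, and for every prime $P$ the quotient $R/P$ is a von Neumann regular domain and hence a field, which gives (m). Moreover, the idempotent trick yields $(y) = (y^n)$ for every $n \geqslant 1$ (iterate $y = y \cdot yz$), so every ideal equals its radical; combined with (m) this gives $I = kh_M(I)$, yielding (d) and (k).

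The back-implications proceed case by case. For $(m) \Rightarrow (l)$, semiprimitivity forces $\mathrm{nil}(R) \subseteq \Jac(R) = 0$, so $R$ is reduced and zero-dimensional, which is the classical criterion for von Neumann regularity. For $(k) \Rightarrow (l)$, semiprimality of $(x^2)$ gives $x \in \sqrt{(x^2)} = (x^2)$. For $(g) \Rightarrow (l)$, the $z$-ideal hypothesis applied to $(x^2)$, together with $h_M(x) = h_M(x^2)$, yields $x \in kh_M(x) = kh_M(x^2) \subseteq (x^2)$. For $(a) \Rightarrow (l)$, compactness of $\Ma(R)$ makes $h_M(x)$ clopen, so $h_M(x)^c = h_M(I)$ for some ideal $I$; the identities \eqref{h_M subset h_M^c} and \eqref{h_M^c subset h_M}, together with $\Jac(R) = 0$, translate $h_M(x) \cap h_M(I) = \emptyset$ and $h_M(x) \cup h_M(I) = \Ma(R)$ into $(x) + I = R$ and $(x) \cap I = 0$ respectively. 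Writing $1 = a + b$ with $a = rx \in (x)$ and $b \in I$, the fact $ab \in (x) \cap I = 0$ makes $a$ idempotent, and $xb \in (x) \cap I = 0$ forces $x = xa = rx^2$.

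The main obstacle will be $(i) \Rightarrow (l)$. The key preliminary step is to verify that $(x^2) + \An(x)$ is essential in $R$ for every $x$: given a nonzero ideal $J$, either $xJ = 0$ (and then $J \subseteq \An(x)$), or there exists $j \in J$ with $xj \neq 0$, in which case $x^2 j$ is a nonzero element of $J \cap (x^2)$ when $x^2 j \neq 0$, and otherwise $xj$ is a nonzero element of $J \cap \An(x)$. By (i) this essential ideal is a $z$-ideal containing $x^2$, so $x \in kh_M(x^2) = kh_M(x) \subseteq (x^2) + \An(x)$; writing $x = rx^2 + s$ with $sx = 0$, the element $y = x - rx^2 = s$ satisfies $y^2 = (x - rx^2)s = xs - rx^2 s = 0$, and reducedness of $R$ (again from $\mathrm{nil}(R) \subseteq \Jac(R) = 0$) forces $y = 0$, so $x = rx^2$ and $R$ is von Neumann regular.
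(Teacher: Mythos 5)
Your proof is correct, but it takes a genuinely different route from the paper. The paper only writes out two implications itself --- (a) $\Rightarrow$ (b) via the ``multiple'' property of Lemma \ref{Elementary h_M}\ref{multiple}, and (l) $\Rightarrow$ (a) via $\An(x)+\langle x\rangle=R$ --- and delegates the entire block (b)--(j) $\Leftrightarrow$ (l) and (k) $\Leftrightarrow$ (l) to \cite[Theorem 5.8]{Aliabad2020Extension} (with $Y=\Ma(R)$ and $Y=\Sp(R)$ respectively), and (l) $\Leftrightarrow$ (m) to \cite[Exercise \S 10.19]{Lam1991First}. You instead build a self-contained hub at (l): the trivial specialization chains among the $z$-ideal conditions, the idempotent trick $x=x^2y$, $e=xy$ for (l) $\Rightarrow$ (a), (d), (k), (m), and direct returns to (l) from (a), (g), (k), (m), and --- the genuinely nontrivial step the paper never exposes --- from (i), via the observation that $\langle x^2\rangle+\An(x)$ is essential, so $x\in kh_M(x^2)\subseteq\langle x^2\rangle+\An(x)$ and reducedness kills the error term. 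Your logical graph is complete (every item reaches and is reached by (l)), and your uses of the standing Section 4 assumption $\Jac(R)=\{0\}$ are exactly where the paper needs it too. What each approach buys: the paper's proof is short and inherits the full $Y$-parameterized machinery of the Extension paper; yours is elementary and self-contained, at the cost of invoking the classical ``reduced and zero-dimensional implies von Neumann regular'' fact in (m) $\Rightarrow$ (l) without proof (on par with the paper's citation of Lam). Three cosmetic points: $h_M(x)$ is closed by definition of the Zariski topology, so compactness plays no role in its being clopen in (a) $\Rightarrow$ (l); the identity $h_M^c(\AI)\subseteq h_M(\BI)$ iff $\AI\BI=\{0\}$ gives you $\langle x\rangle I=\{0\}$ rather than $\langle x\rangle\cap I=\{0\}$ directly, though the elements $ab$ and $xb$ you use lie in the product, so nothing breaks; and in (l) $\Rightarrow$ (m) the reducedness remark is superfluous, since any quotient of a regular ring is regular and a regular domain is a field.
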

\begin{proof}
	\ref{h_M is open} $ \Rightarrow $ \ref{Every is z-ideal}. Suppose that $ \AI $ is an ideal of $ R $, $ h_M(x)=h_M(y) $ and $ x \in \AI $. Since $ h_M(y) $ is open, $ h_M(x)=h_M(y)=h_M(y)^\circ $, thus $ y $ is a multiple of $ x $, by Lemma \ref{Elementary h_M}\ref{multiple} and therefore $ y \in \AI $. This Shows that $ \AI $ is a $z$-ideal.
	
	\ref{Every is z-ideal} $ \Leftrightarrow $ \ref{Every is srong} $ \Leftrightarrow $ \ref{Every is Hilbert} $ \Leftrightarrow $ \ref{Every finitely is z-ideal} $ \Leftrightarrow $ \ref{Every finitely is strong} $ \Leftrightarrow $ \ref{Every principal is z-ideal} $ \Leftrightarrow $ \ref{Every principal is strong} $ \Leftrightarrow $ \ref{Every essential is z-ideal} $ \Leftrightarrow $ \ref{Every essential is strong} $ \Leftrightarrow $ \ref{R is regular}. By the assumption $Y=\Ma(R) $, they follow from \cite[Theorem 5.8]{Aliabad2020Extension}.
	
	\ref{Every is semiprime} $ \Leftrightarrow $ \ref{R is regular}. By the assumption $ Y=\Sp(R) $, it follows from \cite[Theorem 5.8]{Aliabad2020Extension}.
	
	\ref{R is regular} $ \Rightarrow $ \ref{h_M is open}. Suppose that $ x \in R $. Since $ R $ is regular, $ r $ exists such that $ x^2 r=x $. Set $ y=1 - xr $, then 
	\[ \begin{cases}
		y+xr=1 \\
		yx=(1-rx)=x - xr=0 
	\end{cases} \quad \Rightarrow \quad \An(x)+\langle x \rangle=R \]
	So, by the fact (\ref{h_M subset h_M^c}), we deduce that $ h_M(x) \subseteq h_M^c(\An(x)) $ ($\star$). Since $ x \An(x)=0 $, the fact (\ref{h_M^c subset h_M}) follows that $ h_M^c(\An(x)) \subseteq h_M(x) $ $(\star \star)$.
	
	By $ (\star) $ and $ (\star \star) $, we have $ h_M(x)=h_M^c(\An(x)) $ and therefore $ h_M(x) $ is open.
	
	\ref{R is regular} $\Leftrightarrow$ \ref{Every prime is maximal}. It follows from \cite[Excersise \S 10.19]{Lam1991First}.   
\end{proof}

We see in \cite{Kim2003Almost,Levy1977Almost} that for each topological $X$ the following are equivalent
\begin{mylist}
	\item $X$ is almost $P$-space.
	\item Every non-empty zero-set has non-empty interior.
	\item Every regular element in $C(X)$ has the inverse element.
	\item $C(X)$ has no proper regular ideal.
\end{mylist}
In the following corollary we show that $h_M(x)$'s can give the role of zero-sets in the above mentioned statement.

\begin{corollary}
	The following statements are equivalent.
	\begin{mylist}
		\item $ h_M(x) \neq \emp $ implies that $ h_M(x)^\circ \neq \emp $, for each $ x \in R $. \label{almost P-space}
		\item Every regular element of $ R $ is unit. \label{Every regular is unit}
		\item $ R $ has no regular proper ideal. \label{Has no regular proper}
	\end{mylist}
\end{corollary}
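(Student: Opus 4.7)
The plan is to prove (a) $\LRTA$ (b) by a direct translation through Lemma \ref{Elementary h_M}, and then to prove (b) $\LRTA$ (c) by a routine argument on regular ideals.

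For the first equivalence, I would rewrite each side of the implication appearing in (a) via Lemma \ref{Elementary h_M}. Part \ref{h_M=empty} of that lemma gives that $h_M(x)\neq\emp$ is equivalent to $x$ being a non-unit, and part \ref{h_M^o=empty} gives that $h_M(x)^\circ\neq\emp$ is equivalent to $\An(x)\neq\{0\}$, that is, to $x$ being a zero-divisor, i.e., to $x$ not being regular. Hence (a) becomes the statement that every non-unit of $R$ is a zero-divisor, whose contrapositive reads ``every regular element of $R$ is a unit'', which is exactly (b).

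For (b) $\RTA$ (c), suppose for contradiction that $I$ is a regular proper ideal of $R$. Then $I$ contains some regular element $x$; by (b), $x$ is a unit, so $I=R$, contradicting the properness of $I$. For the converse (c) $\RTA$ (b), let $x$ be a regular element of $R$; then the principal ideal $\Ge{x}$ contains the regular element $x$, so $\Ge{x}$ is a regular ideal. By (c) it cannot be proper, hence $\Ge{x}=R$ and $x$ is a unit.

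There is no real obstacle here: the whole argument rests on the correct reading of Lemma \ref{Elementary h_M}. If anything is subtle, it is noticing that parts \ref{h_M=empty} and \ref{h_M^o=empty} of the lemma are tailored precisely so that (a) becomes a pointwise restatement of (b); once this is spotted, the passage between (b) and (c) reduces to the definition of a regular ideal.
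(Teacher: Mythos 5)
Your proposal is correct and follows essentially the same route as the paper: the equivalence of (a) and (b) is read off from parts \ref{h_M=empty} and \ref{h_M^o=empty} of Lemma \ref{Elementary h_M}, and the equivalence of (b) and (c) is the routine argument the paper dismisses as straightforward. Your write-up merely spells out these two steps in more detail, which is fine.
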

\begin{proof}
	\ref{almost P-space} $\Leftrightarrow$ \ref{Every regular is unit}. It follows from parts \ref{h_M=empty} and \ref{h_M^o=empty} of Lemma \ref{Elementary h_M}.
	
	\ref{Has no regular proper}. $\Leftrightarrow$ \ref{Every regular is unit}. It is straightforward.
\end{proof}

\end{document}